\def\op{\operatorname}
\def\mmod{\kern-1pt\operatorname{-mod}}
\newtheorem{theorem}{Theorem}[section]
\newtheorem{lemma}[theorem]{Lemma}
\newtheorem{definition}[theorem]{Definition}
\newtheorem{remark}[theorem]{Remark}
\newtheorem{proposition}[theorem]{Proposition}
\newtheorem{conjecture}[theorem]{Conjecture}
\newtheorem{question}[theorem]{Question}
\theoremstyle{proposition}
\numberwithin{equation}{section}
\begin{document}

\title[Tensor product in the category $\mathscr{X}$]{Some conjectures on the quotients of the tensor products   in the category $\mathscr{X}$ }

%    Information for first author
\author{Junbin Dong}
%    Address of record for the research reported here
\address{Institute of Mathematical Sciences, ShanghaiTech University, 393 Middle Huaxia Road, Pudong, Shanghai 201210, China.}
%    Current address
\email{dongjunbin@shanghaitech.edu.cn}
%    \thanks will become a 1st page footnote.
%\thanks{}

%    General info
\subjclass[2010]{20C07, 20G05}

% \date{September 23, 2023}

\keywords{Reductive algebraic group, tensor product,  quotient.}

\begin{abstract} Let ${\bf G}$ be a connected reductive algebraic group defined over the finite field $\mathbb{F}_q$ with $q$ elements.
We propose some conjectures concerning the simple quotients of $M\otimes N$, where $M,N$  are objects in  the representation category $\mathscr{X}({\bf G})$ introduced by the author in a previous work to study the complex representations of  ${\bf G}$. We provide several pieces of evidence for these conjectures. In particular, we show that  these conjectures are valid for  ${\bf G}=SL_2(\bar{\mathbb{F}}_q)$.
\end{abstract}

\maketitle

\section{Introduction}

Let ${\bf G}$ be a connected reductive algebraic  group defined over the finite field $\mathbb{F}_q$ with $q$ elements.  Let $\Bbbk$ be an algebraically closed field of characteristic zero,  and all the representations considered  in this paper are over $\Bbbk$.
Let $\bf T$ be a maximal  torus contained in a  Borel subgroup $\bf B$ and $\theta$  be   a character of ${\bf T}$. Thus $\theta$  can also be regarded as a character of ${\bf B}$ by letting ${\bf U}$ (the unipotent radical of ${\bf B}$) act trivially.  The naive induced module $\mathbb{M}(\theta)=\Bbbk{\bf G}\otimes_{\Bbbk{\bf B}}{\Bbbk}_\theta$  has been studied in \cite{CD3}. The  induced module $\mathbb{M}(\theta)$  has a composition series of finite length,  and its composition factors are $E(\theta)_J$ with $J\subset I(\theta)$ (see Section 2 for the explicit definition of $I(\theta)$ and $E(\theta)_J$). Consequently,  we have obtained a large class of irreducible modules of ${\bf G}$, almost all of which are infinite dimensional. Not long after that, J.B. Dong introduced the principal representation category $\mathscr{O}({\bf G})$  in  \cite{D1}, which  was conjectured to be a highest weight category in the sense of \cite{CPS}.  In particular,  extensions of the simple modules in $\mathscr{O}({\bf G})$ were expected to  have good properties.  However, shortly thereafter,   X.Y. Chen constructed a counterexample (see \cite{C3}) showing  that this conjecture is not true. His example also indicates that the  category $\mathscr{O}({\bf G})$ may be more complicated than we thought even for ${\bf G}= SL_2(\overline{\mathbb{F}}_q)$.

In the paper \cite{D3}, we introduce the representation category $\mathscr{X}({\bf G})$, whose definition is inspired by the structure of  the $\Bbbk {\bf G}$-modules $\mathbb{M}(\theta)_J$ and  $E(\theta)_J$.  We show that the category $\mathscr{X}({\bf G})$ is an abelian category and also a highest weight category in the sense of \cite{CPS}. We also classify all the simple objects in the category $\mathscr{X}({\bf G})$. For any two $\Bbbk {\bf G}$-modules $M,N \in \mathscr{X}({\bf G})$, the tensor product $M\otimes N$ does not generally lie in $\mathscr{X}({\bf G})$. From many examples, we observe that the submodules of $M\otimes N$  are not in $\mathscr{X}({\bf G})$ either. However,  $M\otimes N$ has many simple quotients that belong to $\mathscr{X}({\bf G})$. In this paper, we put forward some conjectures concerning  the simple  quotients of $M\otimes N$ for any two objects   $M, N\in \mathscr{X}({\bf G})$.

The rest of this paper is organized as follows: In Section 2, we present some notations and preliminary results. In Section 3, we propose several conjectures concerning the simple quotients  of $M\otimes N$  for any two objects $M,N \in \mathscr{X}({\bf G})$,  and provide some evidence to support them. In Section 4, we show that these conjectures hold for ${\bf G}= SL_2(\overline{\mathbb{F}}_q)$.

\section{Notations and Preliminaries}

As in the introduction,  ${\bf G}$ is a connected reductive algebraic group defined over $\mathbb{F}_q$ with the standard Frobenius homomorphism $\text{Fr}$ induced by the automorphism $x\mapsto x^q$ on $\bar{\mathbb{F}}_q$, where $q$ is a power of a prime number $p$.   Let ${\bf B}$ be an $\text{Fr}$-stable Borel subgroup, and ${\bf T}$ be an $\text{Fr}$-stable maximal torus contained in ${\bf B}$, and ${\bf U}=R_u({\bf B})$ be the ($\text{Fr}$-stable) unipotent radical of ${\bf B}$. We identify ${\bf G}$ with ${\bf G}(\bar{\mathbb{F}}_q)$ and do likewise for the various subgroups of ${\bf G}$ such as ${\bf B}, {\bf T}, {\bf U}$ $\cdots$. We denote by $\Phi=\Phi({\bf G};{\bf T})$ the corresponding root system, and by $\Phi^+$ (resp. $\Phi^-$) the set of positive (resp. negative) roots determined by ${\bf B}$. Let $W=N_{\bf G}({\bf T})/{\bf T}$ be the corresponding Weyl group and denote by $\ell(w)$ the length of $w\in W$. We denote by $\Delta=\{\alpha_i\mid i\in I\}$ the set of simple roots, and by $S=\{s_i\mid i\in I\}$ the corresponding simple reflections. For each $\alpha\in\Phi$, let ${\bf U}_\alpha$ be the root subgroup corresponding to $\alpha$ and we fix an isomorphism $\varepsilon_\alpha: \bar{\mathbb{F}}_q\rightarrow{\bf U}_\alpha$ such that $t\varepsilon_\alpha(c)t^{-1}=\varepsilon_\alpha(\alpha(t)c)$ for any $t\in{\bf T}$ and $c\in\bar{\mathbb{F}}_q$. For any $w\in W$, let $\Phi_w^+=\{\alpha\in\Phi^+\mid w(\alpha)\in\Phi^+\}$ and $\Phi_w^-=\{\alpha\in\Phi^+\mid w(\alpha)\in\Phi^-\}$.
Let ${\bf U}_w$ (resp.  ${\bf U}'_w$ ) be the subgroup of ${\bf U}$ generated by all ${\bf U}_\alpha$  with $\alpha\in\Phi_w^-$  (resp.  $\alpha\in\Phi_w^+$ ).  One can refer to \cite{Car} for  the structure theory of algebraic groups.

Let $\Bbbk$ be an algebraically closed field of characteristic zero.  In this paper, we consider the abstract representations of  ${\bf G}$ over $\Bbbk$. For any finite subset $X$ of ${\bf G}$, let $\underline{X}:=\displaystyle \sum_{x\in X}x \in \Bbbk {\bf G}$.  For each positive integer $n$, we denote $G_{q^n}$ for the $\mathbb{F}_{q^n}$-points of $\bf G$, and do likewise for $B_{q^n},T_{q^n},U_{q^n}$ etc.

 Let $\widehat{\bf T}$ be the character group of ${\bf T}$.  The Weyl group $W$ acts on $\widehat{\bf T}$ by
$$(w\cdot \theta ) (t):=\theta^w(t)=\theta(\dot{w}^{-1}t\dot{w}), \quad \forall \theta\in \widehat{\bf T},$$
where $\dot{w}$  denotes an arbitrary representative of $w \in W$.
Each $\theta\in\widehat{\bf T}$ can be regarded as a character of ${\bf B}$ by letting ${\bf U}$ act trivially.
Denote by  ${\Bbbk}_\theta$ the corresponding ${\bf B}$-module and we consider the induced module $\mathbb{M}(\theta)=\Bbbk{\bf G}\otimes_{\Bbbk{\bf B}}{\Bbbk}_\theta$. Let ${\bf 1}_{\theta}$ be a fixed nonzero element in ${\Bbbk}_\theta$.  For $x\in {\bf G}$, we abbreviate $x{\bf 1}_{\theta}:=x\otimes{\bf 1}_{\theta}\in\mathbb{M}(\theta)$ for simplicity. It is not difficult to see that $\mathbb{M}(\theta)$ has a basis $\{u \dot{w} {\bf 1}_{\theta}\mid w\in W,  u\in {\bf U}_{w^{-1}}\}$ by the Bruhat decomposition, where $\dot{w}$ is a fixed representative of $w \in W$.

For each $i \in I$, let ${\bf G}_i$ be the subgroup of $\bf G$ generated by ${\bf U}_{\alpha_i}, {\bf U}_{-\alpha_i}$ and we set ${\bf T}_i= {\bf T}\cap {\bf G}_i$. For $\theta\in\widehat{\bf T}$, we let $$I(\theta)=\{i\in I \mid \theta| _{{\bf T}_i} \ \text {is trivial}\}.$$
For $J\subset I(\theta)$, let $W_J$ be the subgroup of $W$ generated by $s_i~(i\in J)$ and $w_J$ be the longest element of $W_J$.  Let ${\bf G}_J$ be the subgroup of $\bf G$ generated by ${\bf G}_i~(i\in J)$. We choose a representative $\dot{w}\in {\bf G}_J$ for each $w\in W_J$. Thus, the element $w{\bf 1}_\theta:=\dot{w}{\bf 1}_\theta$  $(w\in W_J)$ is well defined.
For $J\subset I(\theta)$, we set
$$\eta(\theta)_J=\sum_{w\in W_J}(-1)^{\ell(w)}w{\bf 1}_{\theta},$$
where $\ell(w)$ is the length of  $w\in W$.  Let $\mathbb{M}(\theta)_J=\Bbbk{\bf G}\eta(\theta)_J$ be the $\Bbbk {\bf G}$-module which is generated by $\eta(\theta)_J$. For $w\in W$, let  $\mathscr{R}(w)=\{i\in I\mid ws_i<w\}$.  For any subset $J\subset I$, we set
$$
\aligned
W^J &\ =\{x\in W\mid x~\op{has~minimal~length~in}~xW_J\}.
\endaligned
$$
Combining \cite[Proposition 2.5]{CD3} and \cite[(2.10)]{CD3}, we have the following proposition.

\begin{proposition}\label{MJ=KUW}
For any $J\subset I(\theta)$, the $\Bbbk {\bf G}$-module $\mathbb{M}(\theta)_J$ has the form
\begin{align} \label{MJ}
\mathbb{M}(\theta)_J=\sum_{w\in W^J}\Bbbk{\bf U}\dot{w}\eta(\theta)_J=\sum_{w\in W^J}\Bbbk{\bf U}_{w_Jw^{-1}}\dot{w}\eta(\theta)_J,
\end{align}
and the set $\{u\dot{w}\eta(\theta)_J \mid w\in W^J, u\in {\bf U}_{w_Jw^{-1}} \}$ is a basis of $\mathbb{M}(\theta)_J$.
\end{proposition}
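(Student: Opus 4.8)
The plan is to establish the two asserted equalities in \eqref{MJ} and then the basis claim, working from the Bruhat decomposition of $\mathbb{M}(\theta)$ and a careful analysis of how $\eta(\theta)_J$ transforms under ${\bf B}$. First I would recall that $\mathbb{M}(\theta)$ has basis $\{u\dot{w}{\bf 1}_\theta \mid w\in W,\ u\in {\bf U}_{w^{-1}}\}$, and observe that $\eta(\theta)_J = \sum_{v\in W_J}(-1)^{\ell(v)}\dot v{\bf 1}_\theta$ is an alternating sum supported on the cells indexed by $W_J$. The key structural input is the standard parabolic factorization $W = W^J W_J$ with uniqueness of expression and additivity of length, $\ell(xv)=\ell(x)+\ell(v)$ for $x\in W^J$, $v\in W_J$; together with the compatible factorization of unipotent groups ${\bf U}_{(xv)^{-1}}$ relative to the subgroup generated by ${\bf G}_i$ $(i\in J)$. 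Since $\mathbb{M}(\theta)_J = \Bbbk{\bf G}\eta(\theta)_J$ and ${\bf G} = {\bf B}W{\bf B} = \coprod_{w\in W}{\bf U}\dot w{\bf B}$, every element of $\mathbb{M}(\theta)_J$ is a $\Bbbk$-combination of elements $b\dot w \eta(\theta)_J$ with $b\in {\bf B}$; I would push the ${\bf B}$-part through, reducing to ${\bf U}\dot w\eta(\theta)_J$, and then reduce $w$ to lie in $W^J$ using that $\dot v\eta(\theta)_J = (-1)^{\ell(v)}\eta(\theta)_J$ for $v\in W_J$ (this is the sign-invariance of the alternating sum under the left $W_J$-action, which uses $J\subset I(\theta)$ so that $\theta$ is trivial on ${\bf T}_i$, $i\in J$, hence $\dot s_i{\bf 1}_\theta$ behaves correctly). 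This gives the first equality $\mathbb{M}(\theta)_J = \sum_{w\in W^J}\Bbbk{\bf U}\dot w\eta(\theta)_J$.

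For the second equality, I would refine ${\bf U}\dot w\eta(\theta)_J$ down to ${\bf U}_{w_Jw^{-1}}\dot w\eta(\theta)_J$. The idea is to factor ${\bf U} = {\bf U}_{w_Jw^{-1}}\cdot {\bf U}'$ where ${\bf U}'$ is generated by the root subgroups ${\bf U}_\alpha$ with $\alpha\in\Phi^+$ and $(w_Jw^{-1})^{-1}\alpha=w w_J\alpha\in\Phi^+$, and to show that the ${\bf U}'$-part can be absorbed: moving such a root subgroup past $\dot w$ lands it inside the subgroup generated by the ${\bf U}_{\pm\alpha_i}$ with $i\in J$ (or inside ${\bf U}$ acting trivially after hitting ${\bf 1}_\theta$), where one can use the defining alternating-sum structure of $\eta(\theta)_J$ — essentially $\underline{{\bf U}_{\alpha_i}}$-type averaging against the sign — to conclude $u'\dot w\eta(\theta)_J = \dot w\eta(\theta)_J$ for $u'\in {\bf U}'$, or more precisely that $\dot w\eta(\theta)_J$ is killed/fixed appropriately. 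Combining, $\Bbbk{\bf U}\dot w\eta(\theta)_J = \Bbbk{\bf U}_{w_Jw^{-1}}\dot w\eta(\theta)_J$, giving the second equality.

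Finally, for linear independence of $\{u\dot w\eta(\theta)_J \mid w\in W^J,\ u\in{\bf U}_{w_Jw^{-1}}\}$, I would expand each such element in the known basis $\{u'\dot x{\bf 1}_\theta\}$ of $\mathbb{M}(\theta)$ via the Bruhat decomposition. Using $\ell(xv)=\ell(x)+\ell(v)$ for $x\in W^J$, $v\in W_J$, the element $u\dot w\eta(\theta)_J$ should have a "leading term" supported on the Bruhat cell indexed by $w w_J$ (the unique longest element of $wW_J$), with coefficient making these leading terms linearly independent across distinct $w\in W^J$, and the $u$-dependence faithful because conjugating ${\bf U}_{w_Jw^{-1}}$ by $\dot w$ injects into a unipotent group transversal to the stabilizer. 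The main obstacle, and the place requiring the most care, is precisely this bookkeeping in step two and three: controlling exactly which root subgroups survive when passing ${\bf U}$ through $\dot w$ and matching them against $\Phi^-_{w_Jw^{-1}}$, and ensuring the triangularity of the change-of-basis is genuinely unitriangular rather than merely triangular; the commutator relations in ${\bf U}$ and the interaction of $\dot w$ with representatives $\dot v\in {\bf G}_J$ are the technical heart. I expect the cleanest route is to cite or mirror the argument of \cite[Proposition 2.5]{CD3} directly, reducing the whole statement to the corresponding fact about $\mathbb{M}(\theta)$ itself.
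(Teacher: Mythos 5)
This proposition is quoted in the paper from \cite[Proposition 2.5]{CD3} without proof, so there is no in-paper argument to compare against; your final sentence (cite or mirror the argument of \cite{CD3}) is in fact exactly what the paper does. Your reconstruction is nevertheless a faithful outline of the standard proof: reduce via the Bruhat decomposition and the sign-invariance $\dot v\,\eta(\theta)_J=(-1)^{\ell(v)}\eta(\theta)_J$ for $v\in W_J$ to representatives in $W^J$, absorb the complementary part of ${\bf U}$, and get linear independence from a triangular leading-term argument on the cell indexed by $ww_J$ (noting that ${\bf U}_{(ww_J)^{-1}}={\bf U}_{w_Jw^{-1}}$, which is precisely why that subgroup appears in the statement).

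One remark on the step you flag as the technical heart: the absorption in your second paragraph is cleaner than you fear, and no ``averaging against the sign'' inside ${\bf G}_J$ is needed. For $w\in W^J$ one has $w(\Phi_J^+)\subset\Phi^+$, and a short root computation shows that if $\alpha\in\Phi^+$ satisfies $w_Jw^{-1}\alpha\in\Phi^+$ (i.e.\ ${\bf U}_\alpha\subset{\bf U}'_{w_Jw^{-1}}$), then $w^{-1}\alpha\in\Phi^+\setminus\Phi_J^+$, hence $v^{-1}w^{-1}\alpha\in\Phi^+$ for every $v\in W_J$; consequently $\dot v^{-1}\dot w^{-1}{\bf U}_\alpha\dot w\dot v\subset{\bf U}$ fixes ${\bf 1}_\theta$ term by term and ${\bf U}_\alpha$ fixes $\dot w\eta(\theta)_J$ outright. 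Together with the unique factorization ${\bf U}={\bf U}_{w_Jw^{-1}}{\bf U}'_{w_Jw^{-1}}$ this gives the second equality directly, and the same observation makes the change of basis in your third step genuinely unitriangular. So the proposal is correct in outline, with this case analysis being the only detail that needed to be pinned down.
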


For $J\subset I(\theta)$, we define
$$E(\theta)_J=\mathbb{M}(\theta)_J/\mathbb{M}(\theta)_J',$$
where $\mathbb{M}(\theta)_J'$ is the sum of all $\mathbb{M}(\theta)_K$ with $J\subsetneq K\subset I(\theta)$. We denote by $C(\theta)_J$ the image of $\eta(\theta)_J$ in $E(\theta)_J$.
Set
$$
\aligned
Z_J(\theta)&\ =\{w\in W^J \mid \mathscr{R}(ww_J)\subset J\cup (I\backslash I(\theta))\}.
\endaligned
$$

\begin{proposition} \cite[Proposition 2.7]{CD3} \label{DesEJ}
For $J\subset I(\theta)$, we have
\begin{align} \label{EJ}
E(\theta)_J=\sum_{w\in Z_J(\theta)}\Bbbk {\bf U}_{w_Jw^{-1}}\dot{w}C(\theta)_J,
\end{align}
and  the set $\{u\dot{w}C(\theta)_J \mid w\in Z_J(\theta), u\in {\bf U}_{w_Jw^{-1}} \}$ is a basis of $E(\theta)_J$.
\end{proposition}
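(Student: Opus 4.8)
The plan is to read off the structure of $E(\theta)_J=\mathbb{M}(\theta)_J/\mathbb{M}(\theta)_J'$ from Proposition \ref{MJ=KUW} by locating the submodule $\mathbb{M}(\theta)_J'$ inside the basis $\{u\dot{w}\eta(\theta)_J\mid w\in W^J,\ u\in{\bf U}_{w_Jw^{-1}}\}$ of $\mathbb{M}(\theta)_J$. Two preliminary facts are used. First, with a compatible length--additive choice of representatives, $\eta(\theta)_K=\sum_{x\in W_K\cap W^{K'}}(-1)^{\ell(x)}\dot{x}\,\eta(\theta)_{K'}$ whenever $K'\subseteq K\subseteq I(\theta)$; in particular $\mathbb{M}(\theta)_K\subseteq\mathbb{M}(\theta)_{J\cup\{i\}}$ for each $i\in K\setminus J$, so that $\mathbb{M}(\theta)_J'=\sum_{i\in I(\theta)\setminus J}\mathbb{M}(\theta)_{J\cup\{i\}}$. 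Second, for $w\in W^J$ and $i\in I(\theta)\setminus J$ write $w=yx$ with $y\in W^{J\cup\{i\}}$, $x\in W_{J\cup\{i\}}\cap W^J$ and $\ell(w)=\ell(y)+\ell(x)$ (the factorization of $W^J$ through $W_{J\cup\{i\}}$); a short Coxeter--group computation shows $i\in\mathscr R(ww_J)$ if and only if $x$ is the longest element $x_0:=w_{J\cup\{i\}}w_J$ of $W_{J\cup\{i\}}\cap W^J$, and in that case $w_Jw^{-1}=w_{J\cup\{i\}}y^{-1}$. Hence $Z_J(\theta)$ is precisely the set of $w\in W^J$ with $\mathscr R(ww_J)\cap I(\theta)=J$, i.e.\ those whose factor $x$ differs from $x_0$ for every $i\in I(\theta)\setminus J$.

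For the spanning statement \eqref{EJ}, take $w\in W^J\setminus Z_J(\theta)$, choose $i\in I(\theta)\setminus J$ with $i\in\mathscr R(ww_J)$, set $K=J\cup\{i\}$ and write $w=yx_0$ as above. Applying $u\dot{y}$ to $\eta(\theta)_K=\sum_{x\in W_K\cap W^J}(-1)^{\ell(x)}\dot{x}\,\eta(\theta)_J$ and isolating the term $x=x_0$ gives
\[
(-1)^{\ell(x_0)}\,u\dot{w}\,\eta(\theta)_J \;=\; u\dot{y}\,\eta(\theta)_K\;-\;\sum_{x\in W_K\cap W^J,\ x\ne x_0}(-1)^{\ell(x)}\,u\dot{y}\dot{x}\,\eta(\theta)_J .
\]
Now $u\dot{y}\,\eta(\theta)_K\in\mathbb{M}(\theta)_K\subseteq\mathbb{M}(\theta)_J'$, while each remaining summand $u\dot{y}\dot{x}\,\eta(\theta)_J=u\dot{(yx)}\eta(\theta)_J$ rewrites, using that ${\bf U}$ fixes ${\bf 1}_\theta$ exactly as in the proof of Proposition \ref{MJ=KUW}, as a single basis vector $u'\dot{(yx)}\eta(\theta)_J$ with $\ell(yx)<\ell(y)+\ell(x_0)=\ell(w)$. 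So modulo $\mathbb{M}(\theta)_J'$ every basis vector of $\mathbb{M}(\theta)_J$ reduces to a combination of basis vectors of strictly smaller length; iterating on $\ell(w)$ shows that $E(\theta)_J$ is spanned by the images $u\dot{w}C(\theta)_J$ with $w\in Z_J(\theta)$, $u\in{\bf U}_{w_Jw^{-1}}$, which is exactly \eqref{EJ}.

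There remains the basis claim, i.e.\ that these images are linearly independent, equivalently $\mathbb{M}(\theta)_J'\cap\operatorname{span}\{u\dot{w}\eta(\theta)_J\mid w\in Z_J(\theta)\}=0$; combined with the spanning this gives $\mathbb{M}(\theta)_J=\mathbb{M}(\theta)_J'\oplus\operatorname{span}\{u\dot{w}\eta(\theta)_J\mid w\in Z_J(\theta)\}$ and hence the asserted basis of $E(\theta)_J$. Here I would work in $\mathbb{M}(\theta)\supseteq\mathbb{M}(\theta)_J$ with its global basis $\{u\dot{z}\,{\bf 1}_\theta\mid z\in W,\ u\in{\bf U}_{z^{-1}}\}$: expanding $\eta(\theta)_J=\sum_{v\in W_J}(-1)^{\ell(v)}\dot{v}\,{\bf 1}_\theta$ and using $w_Jw^{-1}=w_{J\cup\{i\}}y^{-1}$, one checks that $u\dot{w}\,\eta(\theta)_J$ has a unique term of maximal length, namely $(-1)^{\ell(w_J)}u\dot{(ww_J)}\,{\bf 1}_\theta$, and that a basis vector $u'\dot{y}\,\eta(\theta)_{K}$ of $\mathbb{M}(\theta)_K$ (for $K=J\cup\{i\}$) likewise has the unique top term $(-1)^{\ell(w_K)}u'\dot{(yw_K)}\,{\bf 1}_\theta$. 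Since $\mathscr R(ww_J)\cap I(\theta)=J$ for $w\in Z_J(\theta)$, whereas $\mathscr R(yw_K)\supseteq K$, the global basis vectors carrying these top terms form disjoint sets for the ``good'' vectors and for $\mathbb{M}(\theta)_J'$, and a leading--term argument along the length filtration of $\mathbb{M}(\theta)$ forces any vector in the intersection to vanish.

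The step I expect to be the main obstacle is making this last argument fully rigorous when $|I(\theta)\setminus J|\ge 2$: the sets of top--length basis vectors coming from the different $\mathbb{M}(\theta)_{J\cup\{i\}}$ overlap, so before reading off a leading term one must exclude cancellation among them. The cleanest route I see is to prove the whole proposition first for the finite groups $G_{q^n}$ — where the modules $\mathbb{M}(\theta)_K$ are finite dimensional, Proposition \ref{MJ=KUW} gives their dimensions, and an inclusion--exclusion over the lattice $\{K\mid J\subseteq K\subseteq I(\theta)\}$ computes $\dim\mathbb{M}(\theta)_J'$ and hence $\dim E(\theta)_J$, matching the size of the proposed basis — and then to pass to the direct limit ${\bf G}=\bigcup_n G_{q^n}$, under which all the relevant modules and structure maps are compatible.
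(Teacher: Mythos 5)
The paper does not prove this statement at all --- it is quoted verbatim from \cite[Proposition 2.7]{CD3} --- so there is no internal proof to compare against; I can only assess your argument on its own terms. Your first half is in good shape: the identity $\eta(\theta)_K=\sum_{x\in W_K\cap W^{K'}}(-1)^{\ell(x)}\dot x\,\eta(\theta)_{K'}$, the reduction of $\mathbb{M}(\theta)_J'$ to the corank-one terms $\mathbb{M}(\theta)_{J\cup\{i\}}$, the characterization of $i\in\mathscr R(ww_J)$ via the factor $x=w_{J\cup\{i\}}w_J$ (this is correct: if $\mathscr R(xw_J)\supseteq J\cup\{i\}$ for $xw_J\in W_{J\cup\{i\}}$ then $xw_J=w_{J\cup\{i\}}$), the identity $w_Jw^{-1}=w_{J\cup\{i\}}y^{-1}$ matching the unipotent parts, and the resulting downward induction on $\ell(w)$ all give a sound proof that the proposed set spans $E(\theta)_J$.

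The linear independence half, however, is genuinely incomplete, and you have correctly located the obstruction yourself: when $|I(\theta)\setminus J|\ge 2$ a ``bad'' $w\in W^J\setminus Z_J(\theta)$ is typically the leading term of basis vectors coming from several different $\mathbb{M}(\theta)_{J\cup\{i\}}$, and an element of $\mathbb{M}(\theta)_J'$ is a sum over these, so its top-length component can a priori cancel; the leading-term argument proves nothing until this is excluded. Unfortunately the repair you propose does not close the gap. Inclusion--exclusion for dimensions of a sum of subspaces is false for three or more subspaces (three distinct lines in a plane already violate it), and even for two subspaces it requires knowing $\dim\bigl(\mathbb{M}(\theta)_{K_1}\cap\mathbb{M}(\theta)_{K_2}\bigr)$; the needed input is precisely $\mathbb{M}(\theta)_{K_1}\cap\mathbb{M}(\theta)_{K_2}=\mathbb{M}(\theta)_{K_1\cup K_2}$, a statement of the same depth as the proposition you are proving (in \cite{CD3} such a lattice/intersection lemma is established separately and is the real engine behind the basis claim). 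So the finite-group-plus-direct-limit route, as described, is circular: you would be assuming the intersection pattern of the $\mathbb{M}(\theta)_K$ in order to count dimensions. To complete the proof you need either that intersection lemma, or a refined triangularity argument showing directly that $\mathbb{M}(\theta)_J'=\bigoplus_{w\in W^J\setminus Z_J(\theta)}\Bbbk{\bf U}_{w_Jw^{-1}}\,\xi_w$ for suitable elements $\xi_w\equiv \dot w\eta(\theta)_J$ modulo lower length, e.g.\ by an induction on $|I(\theta)\setminus J|$ that controls the overlaps among the sets of leading terms.
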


By  \cite[Theorem 3.1]{CD3},  the composition factors of  $\mathbb{M}(\theta)$  are $E(\theta)_J$ $ (J\subset I(\theta))$,  with each of multiplicity one. According to \cite[Proposition 2.8]{CD3}, one has that  $E(\theta_1)_{K_1}$ is isomorphic to $E(\theta_2)_{K_2}$ as $\Bbbk {\bf G}$-modules if and only if $\theta_1=\theta_2$ and $K_1=K_2$. A special case is that $\theta=\op{tr}$ and $J=I$. Thus we get the Steinberg module $\op{St}$, which is generated by $\displaystyle \eta=\sum_{w\in W} (-1)^{\ell(w)} w {\bf 1}_{\op{tr}}$.  By Proposition \ref{MJ=KUW}, we have $\op{St}= \Bbbk {\bf U} \eta$.

Inspired by the property of $\mathbb{M}(\theta)_J$, $E(\theta)_J$, we construed a representation category $\mathscr{X}({\bf G})$ in \cite{D3}. For the convenience of later discussion,   we recall the definition of the category $\mathscr{X}({\bf G})$.
Let $M$ be a  $\Bbbk {\bf G}$-module. For  a character $\theta\in \widehat{\bf T}$,  let
$$ M_{\theta}= \{v\in M \mid  t v= \theta(t) v,\  \forall \ t\in {\bf T} \}$$
and we set $M_{\bf T}=\displaystyle  \bigoplus_{\theta\in \widehat{\bf T} } M_{\theta} $. Denote
$\mathbb{X}_{\bf T}(M)=\{\theta\in \widehat{\bf T} \mid   M_{\theta} \ne 0\}.$
Combining  \cite[Definition 4.1]{D3} and  \cite[Proposition 4.12]{D3}, we get the definition of $\mathscr{X}({\bf G})$ as follows:

\begin{definition}
The category $\mathscr{X}({\bf G})$ is defined to be the full subcategory of $\Bbbk {\bf G}$-Mod whose objects are the modules
 satisfying the following  condition:

 \noindent ($\star$)  $M_{\bf T}$ is a finite-dimensional space and there exists a basis $\{\xi_1, \xi_2, \dots, \xi_m\}$ of   $M_{\bf T}$, which consists of ${\bf T}$-eigenvectors such that $M=\displaystyle \bigoplus_{i=1}^m \Bbbk {\bf U} \xi_i$, and moreover for each $i=1,2,\dots, m$,  there exists $w_i\in W$ such that $\Bbbk {\bf U} \xi_i \cong \op{Ind}^{\bf U}_{{\bf U}'_{w_i}}\op{tr}$ as $\Bbbk {\bf U}$-modules.
\end{definition}

According to \cite[Theorem 5.6]{D3},  $\mathscr{X}({\bf G})$ is an abelian category. Moreover, we have classified all the simple objects in $\mathscr{X}({\bf G})$. They are exactly the $\Bbbk {\bf G}$-modules $E(\theta)_J$,  where  $\theta\in \widehat{\bf T}$ and  $J\subset I(\theta)$ (see \cite[Theorem 5.10]{D3}).

\section{Quotients of the tensor products in $\mathscr{X}({\bf G})$}

For any $M, N\in \mathscr{X}({\bf G})$, we see that $M\otimes N$ is not in $\mathscr{X}({\bf G})$ in general.
The case of ${\bf G}=SL_2(\bar{\mathbb{F}}_q)$ (see Remark \ref{remark}) also shows that the simple quotients of  $M\otimes N$  are not in $\mathscr{X}({\bf G})$ in general. To study the simple quotients of $M, N\in \mathscr{X}({\bf G})$, we put forward two conjectures.

\begin{conjecture}\label{Homfinite}
Let $M,N \in  \mathscr{X}({\bf G})$. One has that
$$\dim_{\Bbbk} \op{Hom}_{\Bbbk \bf G}(M\otimes N, L)< \infty$$ for any simple object $L \in  \mathscr{X}({\bf G})$.
\end{conjecture}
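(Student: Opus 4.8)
The plan is to reduce the problem to a finiteness statement about $\bf T$-weight spaces. Since $L \cong E(\theta)_J$ for some $\theta \in \widehat{\bf T}$ and $J \subset I(\theta)$, and $L$ is generated as a $\Bbbk{\bf G}$-module by the weight vector $C(\theta)_J$, any homomorphism $\varphi \colon M \otimes N \to L$ is determined by where it sends the preimage of a generating set; more precisely, $\varphi$ factors through the quotient of $M\otimes N$ by a submodule, and the image is determined by the action on a single cyclic generator. The key observation should be that $\op{Hom}_{\Bbbk{\bf G}}(M\otimes N, L)$ embeds (via restriction to the $\theta$-weight space and composition with projection) into something controlled by $(M\otimes N)_\theta$ together with the requirement that a chosen vector be killed by the appropriate Demazure-type operators and by ${\bf U}'_{w}$-type subgroups that cut $L$ down from $\mathbb{M}(\theta)$. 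So the first step is: show $\op{Hom}_{\Bbbk{\bf G}}(M\otimes N, L)$ injects into $\op{Hom}_{\Bbbk{\bf T}}\big((M\otimes N)_\theta, L_\theta\big)$ or an analogous space, using that $L$ is simple and generated in weight $\theta$.

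The second step is to understand $(M\otimes N)_{\bf T}$. Here I would use the $\mathscr{X}({\bf G})$-structure of $M$ and $N$: write $M = \bigoplus_i \Bbbk{\bf U}\xi_i$ and $N = \bigoplus_j \Bbbk{\bf U}\zeta_j$ as in condition $(\star)$, with $\Bbbk{\bf U}\xi_i \cong \op{Ind}^{\bf U}_{{\bf U}'_{w_i}}\op{tr}$ and similarly for $\zeta_j$. Then $M \otimes N = \bigoplus_{i,j} \Bbbk{\bf U}\xi_i \otimes \Bbbk{\bf U}\zeta_j$ as $\Bbbk{\bf U}$-modules (diagonal action), and each summand is a tensor product of two induced modules from subgroups of $\bf U$. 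The crucial point is that although $M$ and $N$ are typically infinite dimensional, the ${\bf T}$-weights appearing in $M$ and in $N$ each lie in a single $W$-orbit-controlled, \emph{finite} set of ``leading'' weights shifted by sums of \emph{negative} roots (this is visible from the bases in Propositions \ref{MJ=KUW} and \ref{DesEJ}, where $u \in {\bf U}_{w_Jw^{-1}}$ contributes only negative-root shifts). Consequently the weights of $M\otimes N$ lie in $\big(\text{finite set}\big) + \mathbb{Z}_{\le 0}\Phi^+$, and for the fixed weight $\theta$ of $L$, I must show that $(M\otimes N)_\theta$ is finite dimensional — i.e. only finitely many pairs of weights $(\mu,\nu)$ with $\mu + \nu = \theta$, $\mu \in \mathbb{X}_{\bf T}(M)$, $\nu \in \mathbb{X}_{\bf T}(N)$ occur, and each contributes finitely. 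This should follow because $\mathbb{X}_{\bf T}(M) \subset \bigcup_{\text{finite}} (\nu_k - \mathbb{Z}_{\ge 0}\Phi^+)$ and likewise for $N$, so that $\mu + \nu = \theta$ forces both $\mu$ and $\nu$ into bounded regions; the multiplicity of each weight in $M$ (resp. $N$) is the finite number of ${\bf U}$-monomials of the right weight, since $\op{Ind}^{\bf U}_{{\bf U}'_w}\op{tr}$ has finite weight multiplicities.

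The third step is to assemble these: $\dim_\Bbbk \op{Hom}_{\Bbbk{\bf G}}(M\otimes N, L) \le \dim_\Bbbk \op{Hom}_{\Bbbk{\bf T}}\big((M\otimes N)_\theta, L_\theta\big) \le \dim_\Bbbk (M\otimes N)_\theta \cdot \dim_\Bbbk L_\theta < \infty$, where finiteness of $L_\theta = E(\theta)_J{}_\theta$ follows from Proposition \ref{DesEJ} (the weight-$\theta$ part corresponds to $w = e$ and a subgroup of $\bf U$ fixing the weight, hence a bounded set).

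The main obstacle I expect is Step 2, specifically proving the ``bounded below in the root order'' property of $\mathbb{X}_{\bf T}(M)$ for an arbitrary $M \in \mathscr{X}({\bf G})$, and making precise the claim that each $\Bbbk{\bf U}\xi_i \cong \op{Ind}^{\bf U}_{{\bf U}'_{w_i}}\op{tr}$ has all weights of the form $\chi_i - (\text{sum of positive roots})$ with finite multiplicities. The subtlety is that the isomorphism in $(\star)$ is only as $\Bbbk{\bf U}$-modules, so one must track how ${\bf T}$ acts compatibly; I would handle this by noting that $\xi_i$ is a ${\bf T}$-eigenvector and that ${\bf U}$ acts on $\Bbbk{\bf U}\xi_i$ through the quotient ${\bf U}/{\bf U}'_{w_i}$, whose ``monomial'' basis vectors $\varepsilon_{\beta_1}(c_1)\cdots\varepsilon_{\beta_r}(c_r)\,\xi_i$ with $\beta_k \in \Phi^+_{w_i}$ shift the weight by $-\sum m_k \beta_k$ — but one must be careful that this is a decreasing shift, i.e.\ check the sign convention relating ${\bf U}'_{w}$, $\Phi^+_w$, and the ${\bf T}$-action $t\varepsilon_\alpha(c)t^{-1} = \varepsilon_\alpha(\alpha(t)c)$, and then bound the number of such monomials of a given weight. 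A clean way to finish is to invoke that $E(\theta)_J$ and $\mathbb{M}(\theta)_J$ have explicit bases with exactly this structure (Propositions \ref{MJ=KUW}, \ref{DesEJ}), and that every object of $\mathscr{X}({\bf G})$ has a finite filtration by such modules (since $\mathscr{X}({\bf G})$ is a highest weight category with these as standard-type objects), reducing the general case to $M, N$ of the form $\mathbb{M}(\theta)_J$, for which the weight combinatorics is completely explicit.
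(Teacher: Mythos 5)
This statement is a \emph{conjecture}: the paper does not prove it in general, but only verifies it for $M=\mathbb{M}(\lambda)_J$, $N=\mathbb{M}(\mu)$ (via Proposition \ref{tensorinducedmodule} and Frobenius reciprocity) and, by explicit computation, for ${\bf G}=SL_2(\bar{\mathbb{F}}_q)$. Your proposal does not close the gap, and its central mechanism is flawed. The modules here are abstract representations of the infinite group ${\bf G}(\bar{\mathbb{F}}_q)$ over a field of characteristic zero, not rational representations, so there is no weight-space decomposition with negative-root shifts. A vector $\varepsilon_\beta(c)\xi_i$ with $c\neq 0$ is \emph{not} a ${\bf T}$-eigenvector (the relation $t\varepsilon_\beta(c)t^{-1}=\varepsilon_\beta(\beta(t)c)$ moves it to a different basis vector rather than rescaling it), and $M_{\bf T}$ is by definition the finite-dimensional span of the $\xi_i$; the rest of $M$ contains essentially no eigenvectors at all. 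Consequently your Step 2 (``$\mathbb{X}_{\bf T}(M)$ lies in a finite set minus $\mathbb{Z}_{\geq 0}\Phi^+$, with finite multiplicities'') is not meaningful in this setting, and, more seriously, your Step 1 fails: a nonzero $\varphi\in\op{Hom}_{\Bbbk{\bf G}}(M\otimes N,L)$ need not be detected on $(M\otimes N)_\theta$. The paper's own $SL_2$ analysis exhibits exactly this obstruction: the summand $V_-\subset\op{St}\otimes\op{St}$ satisfies $(V_-)_{\bf T}=0$ yet admits nonzero maps onto simple modules (Remark \ref{remark}); ruling out nonzero maps to simple objects \emph{of} $\mathscr{X}({\bf G})$ is the content of Proposition \ref{Vminus}, a delicate counting argument over the subfields $\mathbb{F}_{q^n}$, not a formal consequence of weight considerations.

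Your proposed ``clean finish'' also does not finish. A finite filtration of $M$ and $N$ by the $\mathbb{M}(\theta)_J$ does reduce the problem, by left-exactness of $\op{Hom}(-,L)$, to the case $M=\mathbb{M}(\lambda)_K$, $N=\mathbb{M}(\mu)_{L'}$; but the identity $\mathbb{M}(\lambda)_K\otimes\mathbb{M}(\mu)\cong\op{Ind}^{\bf G}_{\bf B}(\op{Res}^{\bf G}_{\bf B}\mathbb{M}(\lambda)_K\otimes\Bbbk_\mu)$ used in Proposition \ref{tensorinducedmodule} requires the second factor to be induced from ${\bf B}$ itself, i.e.\ $L'=\emptyset$. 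The case $L'\neq\emptyset$ --- already $\op{St}\otimes\op{St}$ --- is precisely where the conjecture is open and where the paper must resort to the hands-on $SL_2$ computation. So the reduction lands you exactly on the unresolved case rather than resolving it.
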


\begin{conjecture}\label{Homzero}
Let $M,N \in  \mathscr{X}({\bf G})$. For any simple object $L \in  \mathscr{X}({\bf G})$, we have
$\op{Hom}_{\Bbbk \bf G}(M\otimes N,  L)=0$  if  $\mathbb{X}_{\bf T}(L) \cap \mathbb{X}_{\bf T}(M\otimes N)= \emptyset$.
\end{conjecture}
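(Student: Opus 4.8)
The plan is to reduce the statement to a concrete fact about $\mathbf{T}$-weights. Suppose $L$ is a simple object in $\mathscr{X}(\mathbf{G})$, so by the classification $L \cong E(\theta)_J$ for some $\theta \in \widehat{\mathbf{T}}$ and $J \subset I(\theta)$. Fix a nonzero $\Bbbk\mathbf{G}$-homomorphism $\varphi: M \otimes N \to L$; I want to show $\varphi = 0$ under the hypothesis $\mathbb{X}_{\mathbf{T}}(L) \cap \mathbb{X}_{\mathbf{T}}(M\otimes N) = \emptyset$. The first step is to observe that $L$, being in $\mathscr{X}(\mathbf{G})$, is generated as a $\Bbbk\mathbf{G}$-module (indeed already as a $\Bbbk\mathbf{U}$-module after applying suitable group elements) by its $\mathbf{T}$-eigenvectors; more precisely, by condition $(\star)$ we have $L = \bigoplus_i \Bbbk\mathbf{U}\xi_i$ with each $\xi_i$ a $\mathbf{T}$-eigenvector, and in particular $L_{\mathbf{T}} \neq 0$ and $L$ is generated by $L_{\mathbf{T}}$ over $\Bbbk\mathbf{G}$. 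Since $\varphi$ is surjective onto the simple module $L$ (a nonzero map to a simple module is surjective), the image $\varphi(M\otimes N)$ contains all of $L_{\mathbf{T}}$.

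The key step is a weight-space computation: I claim $\varphi\big((M\otimes N)_{\mathbf{T}}\big) \subseteq L_{\mathbf{T}}$, and in fact one should have $L_{\mathbf{T}} \subseteq \varphi\big((M\otimes N)_{\mathbf{T}}\big)$. The inclusion $\varphi\big((M\otimes N)_{\mathbf{T}}\big) \subseteq L_{\mathbf{T}}$ is immediate since $\varphi$ is $\mathbf{T}$-equivariant, and it sends the $\theta'$-weight space of $M\otimes N$ into the $\theta'$-weight space of $L$ for every $\theta'$. For the reverse inclusion one uses that $M \otimes N$ is spanned by its $\mathbf{T}$-eigenvectors — this holds because $M$ and $N$ each have a basis of $\Bbbk\mathbf{U}$-translates of $\mathbf{T}$-eigenvectors by $(\star)$, and although a product $u\xi_i \otimes u'\xi_j$ of such basis vectors is not itself a $\mathbf{T}$-eigenvector, the whole space $M\otimes N$ is still a sum of weight spaces for the diagonal $\mathbf{T}$-action (any $\Bbbk\mathbf{T}$-module is semisimple here since $\Bbbk$ has characteristic zero and we only need the relevant finite quotient tori to act semisimply, which they do). Hence $(M\otimes N)_{\mathbf{T}} = \bigoplus_{\theta'} (M\otimes N)_{\theta'}$ decomposes $M\otimes N$ fully, and $\varphi$ restricted to weight spaces is a direct sum of maps $(M\otimes N)_{\theta'} \to L_{\theta'}$. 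Surjectivity of $\varphi$ then forces surjectivity on each weight space, so $L_{\theta'} = \varphi\big((M\otimes N)_{\theta'}\big)$ for every $\theta' \in \widehat{\mathbf{T}}$.

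Now the hypothesis bites: if $\theta' \in \mathbb{X}_{\mathbf{T}}(L)$ then $L_{\theta'} \neq 0$, but by assumption $\theta' \notin \mathbb{X}_{\mathbf{T}}(M\otimes N)$, i.e. $(M\otimes N)_{\theta'} = 0$, so $\varphi\big((M\otimes N)_{\theta'}\big) = 0$, a contradiction with $L_{\theta'} = \varphi\big((M\otimes N)_{\theta'}\big) \neq 0$. Since $\mathbb{X}_{\mathbf{T}}(L) \neq \emptyset$ (as $L_{\mathbf{T}} \neq 0$ by $(\star)$), this is a genuine contradiction, so no nonzero $\varphi$ exists and $\op{Hom}_{\Bbbk\mathbf{G}}(M\otimes N, L) = 0$.

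The main obstacle I anticipate is justifying cleanly that $M \otimes N$ decomposes as a direct sum of $\mathbf{T}$-weight spaces, i.e. that the diagonal $\mathbf{T}$-action on $M \otimes N$ is semisimple, since $M$ and $N$ are infinite-dimensional and $\mathbf{T}$ is an infinite torus (not a finite group, so one cannot simply invoke Maschke). The resolution is that on each finite-dimensional piece $\Bbbk\mathbf{U}\xi_i$, only finitely many characters of a quotient $T_{q^n}$ are involved — really one reduces the $\mathbf{T}$-action on the relevant subspaces to an action of a finite torus — and $\Bbbk$ has characteristic zero, so these actions are diagonalizable; taking tensor products and direct sums preserves this. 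Once that semisimplicity is in hand, the argument above is essentially formal. A secondary point to check is that $\varphi$ being surjective forces surjectivity weight-space-by-weight-space, but this is automatic from $\mathbf{T}$-equivariance together with the weight decomposition on both sides.
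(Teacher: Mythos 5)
The argument fails at its central step: the claim that $M\otimes N$ (or even $M$ alone) decomposes as a direct sum of $\mathbf{T}$-weight spaces is false, and the ``resolution'' you offer for this obstacle rests on a false premise, namely that the pieces $\Bbbk\mathbf{U}\xi_i$ are finite-dimensional. Here $\mathbf{U}=\mathbf{U}(\bar{\mathbb{F}}_q)$ is an infinite group, so $\Bbbk\mathbf{U}\xi_i\cong\operatorname{Ind}^{\mathbf{U}}_{\mathbf{U}'_{w_i}}\operatorname{tr}$ is infinite-dimensional whenever $w_i\neq e$, and the $\mathbf{T}$-action does not factor through any finite torus. Concretely, take $\mathbf{G}=SL_2$ and $M=N=\operatorname{St}=\Bbbk\mathbf{U}\eta$: one has $h(t)\,\varepsilon(x)\eta=\varepsilon(t^2x)\eta$, so $\mathbf{T}$ permutes the basis $\{\varepsilon(x)\eta\}$ with a single infinite orbit on $\bar{\mathbb{F}}_q^{*}$; since elements of $\Bbbk\mathbf{U}\eta$ have finite support, the only $\mathbf{T}$-eigenvectors are the multiples of $\eta$, i.e.\ $\operatorname{St}_{\mathbf{T}}=\Bbbk\eta$ is one-dimensional inside an infinite-dimensional module. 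The same computation gives $(\operatorname{St}\otimes\operatorname{St})_{\mathbf{T}}=\Bbbk(\eta\otimes\eta)$. This is exactly the point of the definition of $\mathscr{X}(\mathbf{G})$: $M_{\mathbf{T}}$ is required to be \emph{finite-dimensional} while $M$ itself is typically not, so $M\neq\bigoplus_{\theta'}M_{\theta'}$. Consequently your step ``surjectivity of $\varphi$ forces surjectivity on each weight space'' has no basis: $\varphi\bigl((M\otimes N)_{\theta'}\bigr)\subseteq L_{\theta'}$ is fine, but nothing forces $L_{\theta'}$ to be hit by $(M\otimes N)_{\theta'}$ when the weight vectors span only a tiny subspace of $M\otimes N$.

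You should also be aware that this statement is an open conjecture in the paper, not a theorem. The paper proves it only for $M=\mathbb{M}(\lambda)_J$, $N=\mathbb{M}(\mu)$ (via Proposition \ref{tensorinducedmodule} and Frobenius reciprocity), reduces the general case to Conjecture \ref{specialconjecture} about $\operatorname{Hom}_{\Bbbk\mathbf{G}}(\operatorname{St}\otimes\operatorname{St},E(\theta)_J)$, and verifies that core case for $SL_2(\bar{\mathbb{F}}_q)$ only through the delicate computations of Propositions \ref{Vplus} and \ref{Vminus}, which work with growing finite subfields $\mathbb{F}_{q^n}$ and averaging operators $\underline{\mathfrak{D}_{q^n}}\,\underline{U_{q^n}}$ precisely because no naive weight-space argument is available. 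If your two-paragraph argument were correct, all of Section 4 would be unnecessary; the fact that it collapses the hardest case ($\operatorname{St}\otimes\operatorname{St}$, whose weight spaces for nontrivial characters all vanish) to a triviality is itself a strong signal that the weight decomposition you assume cannot hold.
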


If these two conjectures hold, then we can define the maximal semisimple quotient of $M\otimes N$ in $\mathscr{X}({\bf G})$.
Denote by $\mathcal{Q}(M\otimes N)$   the maximal semisimple quotient of $M\otimes N$ in $\mathscr{X}({\bf G})$.
For any fixed $M,N$, we set $r_{\theta, J}=\dim \op{Hom}_{\Bbbk \bf G}(M\otimes N,  E(\theta)_J)$. Then it is easy to see that
$$\mathcal{Q}(M\otimes N) \cong \bigoplus_{\theta, J}  E(\theta)^{ \oplus  r_{\theta, J} }_J.$$

\begin{proposition} \label{tensorinducedmodule}
For any  $\lambda, \mu \in \widehat{\bf T}$ and $J\subset I(\lambda)$,  we have
 $$\mathbb{M}(\lambda)_J  \otimes\mathbb{M}(\mu)   \cong   \displaystyle \bigoplus_{w\in W^J}\op{Ind}^{\bf G}_{{\bf B}_w}  \Bbbk_{\lambda^w \mu},$$ where ${\bf B}_w= {\bf T} \ltimes {\bf U}'_{w_Jw^{-1}} $. In particular, we have  $ \op{St}  \otimes \mathbb{M}(\theta)  \cong \op{Ind}^{\bf G}_{\bf T}  \Bbbk_{\theta}$ for any  $\theta\in \widehat{\bf T}$.
\end{proposition}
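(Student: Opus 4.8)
The plan is to write down an explicit $\Bbbk{\bf G}$-module isomorphism on the level of bases and check equivariance. First I would use Proposition~\ref{MJ=KUW}: the module $\mathbb{M}(\lambda)_J$ has basis $\{u\dot{w}\eta(\lambda)_J\mid w\in W^J,\ u\in{\bf U}_{w_Jw^{-1}}\}$, while $\mathbb{M}(\mu)=\Bbbk{\bf G}\otimes_{\Bbbk{\bf B}}\Bbbk_\mu$. The key elementary observation is that for a $\Bbbk{\bf G}$-module $V$, one has $V\otimes(\Bbbk{\bf G}\otimes_{\Bbbk{\bf B}}\Bbbk_\mu)\cong \Bbbk{\bf G}\otimes_{\Bbbk{\bf B}}(V|_{\bf B}\otimes\Bbbk_\mu)$ as $\Bbbk{\bf G}$-modules, i.e. the tensor identity / projection formula, via $v\otimes(g{\bf 1}_\mu)\mapsto g\otimes(g^{-1}v\otimes{\bf 1}_\mu)$. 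Applying this with $V=\mathbb{M}(\lambda)_J$ reduces the problem to understanding $\mathbb{M}(\lambda)_J|_{\bf B}$ as a $\Bbbk{\bf B}$-module, and then inducing up to ${\bf G}$.

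Next I would analyze $\mathbb{M}(\lambda)_J$ as a $\Bbbk{\bf B}$-module. Using the basis from Proposition~\ref{MJ=KUW} indexed by $w\in W^J$, the ${\bf B}={\bf T}\ltimes{\bf U}$ action permutes the ``cells'' $\Bbbk{\bf U}\dot{w}\eta(\lambda)_J$ in a way compatible with the Bruhat-type decomposition; more precisely $\Bbbk{\bf B}\dot{w}\eta(\lambda)_J$ should be shown to be isomorphic to $\op{Ind}^{\bf B}_{{\bf B}_w}\Bbbk_{\lambda^w}$, where ${\bf B}_w={\bf T}\ltimes{\bf U}'_{w_Jw^{-1}}$: the point is that $\dot{w}\eta(\lambda)_J$ is a ${\bf T}$-eigenvector of weight $\lambda^w$ (since $\eta(\lambda)_J$ has weight $\lambda$ and $W_J$ fixes $\lambda$ because $J\subset I(\lambda)$), and the stabilizer of the line $\Bbbk\dot{w}\eta(\lambda)_J$ inside ${\bf U}$ is exactly ${\bf U}'_{w_Jw^{-1}}$ — this last fact is essentially the content of the second equality in \eqref{MJ}, namely $\Bbbk{\bf U}\dot{w}\eta(\lambda)_J=\Bbbk{\bf U}_{w_Jw^{-1}}\dot{w}\eta(\lambda)_J$ together with a dimension/freeness count. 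Hence $\mathbb{M}(\lambda)_J|_{\bf B}\cong\bigoplus_{w\in W^J}\op{Ind}^{\bf B}_{{\bf B}_w}\Bbbk_{\lambda^w}$.

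Then I would combine the two steps: tensoring with $\Bbbk_\mu$ over ${\bf B}$ and applying the tensor identity gives
$$
\mathbb{M}(\lambda)_J\otimes\mathbb{M}(\mu)\cong\Bbbk{\bf G}\otimes_{\Bbbk{\bf B}}\Bigl(\bigoplus_{w\in W^J}\op{Ind}^{\bf B}_{{\bf B}_w}\Bbbk_{\lambda^w}\otimes\Bbbk_\mu\Bigr)\cong\bigoplus_{w\in W^J}\op{Ind}^{\bf G}_{{\bf B}_w}\Bbbk_{\lambda^w\mu},
$$
using transitivity of induction $\op{Ind}^{\bf G}_{\bf B}\op{Ind}^{\bf B}_{{\bf B}_w}=\op{Ind}^{\bf G}_{{\bf B}_w}$ and the fact that restricting $\Bbbk_\mu$ from ${\bf B}$ to ${\bf B}_w$ and twisting gives $\Bbbk_{\lambda^w\mu}$ (here ${\bf U}$ acts trivially on $\Bbbk_\mu$, so $\Bbbk_{\lambda^w}\otimes\Bbbk_\mu|_{{\bf B}_w}=\Bbbk_{\lambda^w\mu}$). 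For the ``in particular'' statement, take $\lambda=\op{tr}$ and $J=I$, so $\mathbb{M}(\op{tr})_I=\op{St}$, $W^I=\{e\}$, $w_I w^{-1}=w_I$, and $\Phi^+_{w_I}=\emptyset$ forces ${\bf U}'_{w_I}=\{1\}$, hence ${\bf B}_e={\bf T}$ and $\lambda^e\mu=\mu$, giving $\op{St}\otimes\mathbb{M}(\mu)\cong\op{Ind}^{\bf G}_{\bf T}\Bbbk_\mu$.

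The main obstacle I expect is the second step: rigorously identifying the ${\bf U}$-stabilizer of $\dot{w}\eta(\lambda)_J$ with ${\bf U}'_{w_Jw^{-1}}$ and checking that the $\Bbbk{\bf B}$-span is genuinely the full induced module $\op{Ind}^{\bf B}_{{\bf B}_w}\Bbbk_{\lambda^w}$ rather than merely a quotient — this requires a careful comparison of dimensions/bases using Proposition~\ref{MJ=KUW}, in particular keeping track of which root subgroups appear in ${\bf U}_{w_Jw^{-1}}$ versus ${\bf U}'_{w_Jw^{-1}}$ and verifying the relevant commutation relations $t\varepsilon_\alpha(c)t^{-1}=\varepsilon_\alpha(\alpha(t)c)$ interact correctly with the $W_J$-alternating sum defining $\eta(\lambda)_J$. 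Once the $\Bbbk{\bf B}$-module structure is pinned down, the rest is formal.
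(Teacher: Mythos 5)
Your proposal is correct and follows essentially the same route as the paper: the paper also starts from the tensor identity ($\mathbb{M}(\lambda)_J\otimes\mathbb{M}(\mu)\cong\op{Ind}^{\bf G}_{\bf B}(\op{Res}^{\bf G}_{\bf B}\mathbb{M}(\lambda)_J\otimes\Bbbk_\mu)$, citing \cite[Lemma 1.3]{Xi}), splits off the $\Bbbk{\bf B}$-summands $\Bbbk{\bf U}_{w_Jw^{-1}}\dot{w}\eta(\lambda)_J$ using Proposition \ref{MJ=KUW}, and identifies each induced piece with $\op{Ind}^{\bf G}_{{\bf B}_w}\Bbbk_{\lambda^w\mu}$ by a basis comparison. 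The only cosmetic difference is that you factor the last step through $\Bbbk{\bf B}\dot{w}\eta(\lambda)_J\cong\op{Ind}^{\bf B}_{{\bf B}_w}\Bbbk_{\lambda^w}$ and transitivity of induction, whereas the paper writes down the explicit ${\bf G}$-level map $\psi:g{\bf 1}_{\lambda^w\mu}\mapsto g(\dot{w}\eta(\lambda)_J\otimes{\bf 1}_\mu)$ and checks its kernel is zero; the technical point you flag (that ${\bf B}_w$ acts on $\dot{w}\eta(\lambda)_J$ by $\lambda^w$) is exactly what the paper needs for $\psi$ to be well defined and leaves as "easy to check."
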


\begin{proof} Using \cite[Lemma 1.3]{Xi}, we have
$$\mathbb{M}(\lambda)_J  \otimes\mathbb{M}(\mu)   \cong  \op{Ind}^{\bf G}_{\bf B} (\op{Res}^{\bf G}_{\bf B} \mathbb{M}(\lambda)_J \otimes \Bbbk_{\mu})$$
as $\Bbbk {\bf G}$-modules. By Proposition \ref{MJ=KUW}, it is easy to see that
$$\mathbb{M}(\lambda)_J=\displaystyle\bigoplus_{w\in W^J}\Bbbk{\bf U}_{w_Jw^{-1}}\dot{w}\eta(\lambda)_J$$
as $\Bbbk {\bf B}$-modules. Thus we get
 $$\mathbb{M}(\lambda)_J  \otimes\mathbb{M}(\mu)   \cong  \bigoplus_{w\in W^J} \op{Ind}^{\bf G}_{\bf B} (
\Bbbk{\bf U}_{w_Jw^{-1}}\dot{w}\eta(\lambda)_J  \otimes \Bbbk_{\mu}).$$
 For any $w\in W^J$,  let  $M_{w}=  \op{Ind}^{\bf G}_{\bf B} (
\Bbbk{\bf U}_{w_Jw^{-1}}\dot{w}\eta(\lambda)_J  \otimes \Bbbk_{\mu}).$  It is easy to see that $M_w$ is generated by $\dot{w}\eta(\lambda)_J  \otimes {\bf 1}_{\mu}$. We show that $ \displaystyle \op{Ind}^{\bf G}_{{\bf B}_w}  \Bbbk_{\lambda^w \mu} \cong M_w$ as $\Bbbk {\bf G}$-modules.  Let
$$\psi:  \op{Ind}^{\bf G}_{{\bf B}_w}  \Bbbk_{\lambda^w \mu}  \rightarrow   M_w, \quad   g{\bf 1}_{\lambda^w \mu} \rightarrow g (\dot{w}\eta(\lambda)_J \otimes {\bf 1}_{\mu}).$$
It is easy to check that $\psi$ is a homomorphism of $\Bbbk {\bf G}$-modules, which is surjective.
For $g=u\dot{z}b\in {\bf G}$,  where $u\in {\bf U}_{z},  z \in W, b\in {\bf B}$, we have
$$ u\dot{z}b (\dot{w}\eta(\lambda)_J\otimes {\bf 1}_{\mu})= \mu(b) (u\dot{z}b\dot{w}\eta(\lambda)_J \otimes  u\dot{z} {\bf 1}_{\mu}).$$
Note the form of the elements on the right-hand side of the tensor product. To determine  $\text{Ker}(\psi)$,  we just need to consider $\sum k_i u \dot{z}b_i  {\bf 1}_{\lambda^w \mu} \in  \text{Ker}(\psi)$,  where $k_i\in {\Bbbk}, u\in {\bf U}_{z},  z \in W,  b_i\in {\bf U}_{w_Jw^{-1}} $. We see that
$$\psi(\sum k_i u\dot{z}b_i{\bf 1}_{\lambda^w \mu})=u\dot{z}\sum k_i b_i(\dot{w}\eta(\lambda)_J \otimes {\bf 1}_{\mu}) =0.$$
Thus $\sum k_i (b_i\dot{w}\eta(\lambda)_J \otimes {\bf 1}_{\mu}) =0. $
So we get all $k_i=0$ and  $\psi$ is an isomorphism of $\Bbbk {\bf G}$-modules. The proposition is proved.
\end{proof}

\begin{proposition} \label{conjholdforinduced}
Conjecture  \ref{Homfinite} and  Conjecture  \ref{Homzero} hold for $M=\mathbb{M}(\lambda)_J $ and $N= \mathbb{M}(\mu) $, where $\lambda, \mu \in \widehat{\bf T}$.
\end{proposition}

\begin{proof}
By Proposition \ref{tensorinducedmodule}, we have
$$\op{Hom}_{\Bbbk \bf G}(\mathbb{M}(\lambda)_J  \otimes\mathbb{M}(\mu),  E(\theta)_J) \cong \bigoplus_{w\in W^J} \op{Hom}_{\Bbbk \bf G}(\op{Ind}^{\bf G}_{{\bf B}_w}  \Bbbk_{\lambda^w \mu},  E(\theta)_J)$$
for any simple object $E(\theta)_J$.
Using Frobenius reciprocity,  for any $w\in W^J$, we get
$$\op{Hom}_{\Bbbk \bf G}(\op{Ind}^{\bf G}_{{\bf B}_w}  \Bbbk_{\lambda^w \mu},  E(\theta)_J) \cong \op{Hom}_{\Bbbk {\bf B}_w}(\Bbbk_{\lambda^w \mu},  E(\theta)_J),$$
which is a subspace of $\op{Hom}_{\Bbbk {\bf T}}(\Bbbk_{\lambda^w \mu},  E(\theta)_J)$. Noting that
$$\dim \op{Hom}_{\Bbbk {\bf T}}(\Bbbk_{\lambda^w \mu},  E(\theta)_J) < \dim (E(\theta)_J)^{\bf T} \leq |W|,$$
therefore Conjecture \ref{Homfinite} holds.  If  $\mathbb{X}_{\bf T}(E(\theta)_J) \cap \mathbb{X}_{\bf T}(\mathbb{M}(\lambda)_J  \otimes\mathbb{M}(\mu))= \emptyset$, then   $\theta \notin W\cdot \lambda^w \mu$ for any $w\in W$.
In this case,   we  have
$ \op{Hom}_{\Bbbk {\bf T}}(\Bbbk_{\lambda^w \mu},  E(\theta)_J) =0$ for any $w\in W$.  Thus Conjecture \ref{Homzero} holds. The proposition  is proved.
\end{proof}

In fact, Conjecture \ref{Homzero} can be simplified to  the following conjecture.

\begin{conjecture} \label{specialconjecture}
 One has that $\op{Hom}_{\Bbbk \bf G}(\op{St}\otimes \op{St},  E(\theta)_J)=0$ for any nontrivial character $\theta\in\widehat{ {\bf T}}$ and $J\subset I(\theta)$.
\end{conjecture}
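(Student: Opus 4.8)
The plan is to reduce the general statement of Conjecture \ref{Homzero} to the special case recorded in Conjecture \ref{specialconjecture}, and then to attack the latter directly. The key observation is that the Steinberg module is, in a precise sense, the "largest" simple object in $\mathscr{X}(\bf G)$ with respect to the $\bf U$-action: by Proposition \ref{tensorinducedmodule} we have $\op{St}\otimes\mathbb{M}(\theta)\cong\op{Ind}^{\bf G}_{\bf T}\Bbbk_\theta$, and $\mathbb{M}(\theta)$ surjects onto every $E(\theta)_J$. More generally, for an arbitrary simple $E(\lambda)_K\in\mathscr{X}(\bf G)$, condition $(\star)$ gives a $\bf U$-module surjection from a direct sum of modules $\op{Ind}^{\bf U}_{{\bf U}'_{w_i}}\op{tr}$ onto $E(\lambda)_K$; since each such $\op{Ind}^{\bf U}_{{\bf U}'_{w_i}}\op{tr}$ is a $\bf U$-module quotient of $\op{Res}^{\bf G}_{\bf U}\op{St}$ (this is essentially the statement that $\op{St}|_{\bf U}$ is, up to the relevant twists, free enough — one uses the basis $\{u\eta\mid u\in\bf U\}$ of $\op{St}$ together with the structure of $\Phi^-_w$), one expects a $\bf B$-module surjection $\op{St}\otimes\Bbbk_\mu\twoheadrightarrow E(\lambda)_K$ for a suitable $\mu$, hence after tensoring and inducing, a $\bf G$-surjection $\op{St}\otimes\op{St}$-type object onto $M\otimes N$. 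I would first make this comparison precise: show that for any $M\in\mathscr{X}(\bf G)$ there is a finite set of characters $\mu_1,\dots,\mu_r$ (lying in $\mathbb{X}_{\bf T}(M)$) and a $\bf G$-module surjection $\bigoplus_j(\op{St}\otimes\Bbbk_{\mu_j})\op{Ind}^{\bf G}_{\bf B}(-)\twoheadrightarrow M$, or more directly a surjection from a sum of modules of the form $\op{Ind}^{\bf G}_{\bf T}\Bbbk_{\mu_j}\cong\op{St}\otimes\mathbb{M}(\mu_j)$ onto $M$.

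Granting such a surjection for both $M$ and $N$, one gets a surjection from a finite direct sum of modules $(\op{St}\otimes\mathbb{M}(\mu_j))\otimes(\op{St}\otimes\mathbb{M}(\nu_k))$ onto $M\otimes N$. Now $(\op{St}\otimes\mathbb{M}(\mu))\otimes(\op{St}\otimes\mathbb{M}(\nu))\cong(\op{St}\otimes\op{St})\otimes(\mathbb{M}(\mu)\otimes\mathbb{M}(\nu))$, and using \cite[Lemma 1.3]{Xi} as in the proof of Proposition \ref{tensorinducedmodule} together with the Bruhat-decomposition basis of $\mathbb{M}(\mu)\otimes\mathbb{M}(\nu)$, this is a direct sum of induced modules $\op{Ind}^{\bf G}_{\bf B}(\op{Res}^{\bf G}_{\bf B}(\op{St}\otimes\op{St})\otimes\Bbbk_{\pi})$ where $\pi$ ranges over characters in $\mathbb{X}_{\bf T}(\mathbb{M}(\mu)\otimes\mathbb{M}(\nu))\subset W\cdot(\mathbb{X}_{\bf T}(M)+\mathbb{X}_{\bf T}(N))$. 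Hence $\op{Hom}_{\Bbbk\bf G}(M\otimes N,E(\theta)_J)=0$ would follow from $\op{Hom}_{\Bbbk\bf G}(\op{Ind}^{\bf G}_{\bf B}(\op{St}\otimes\op{St}\otimes\Bbbk_\pi),E(\theta)_J)=0$ for all relevant $\pi$, i.e. by Frobenius reciprocity from $\op{Hom}_{\Bbbk\bf B}(\op{St}\otimes\op{St}\otimes\Bbbk_\pi,E(\theta)_J)=0$. When $\mathbb{X}_{\bf T}(E(\theta)_J)\cap\mathbb{X}_{\bf T}(M\otimes N)=\emptyset$ one has $\theta\notin W\cdot\pi$ for the $\pi$ occurring; combined with the fact that $\op{St}\otimes\op{St}\otimes\Bbbk_\pi$ has all its $\bf T$-weights of the form $w\pi+\nu$ with $\nu$ a sum of positive roots (Steinberg's weights), this weight incompatibility should kill the $\bf T$-, hence $\bf B$-, homomorphism space — but this last point is exactly where Conjecture \ref{specialconjecture} is needed to handle the borderline weights, so the reduction is genuinely to that statement.

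For Conjecture \ref{specialconjecture} itself I would argue as follows. By Frobenius reciprocity, $\op{Hom}_{\Bbbk\bf G}(\op{St}\otimes\op{St},E(\theta)_J)\cong\op{Hom}_{\Bbbk\bf G}(\op{St},\op{St}^*\otimes E(\theta)_J)$; alternatively, and more usefully, write $\op{St}\otimes\op{St}\cong\op{St}\otimes\mathbb{M}(\op{tr})/(\text{lower terms})$ is not quite right, so instead use $\op{St}\otimes\op{St}$ as a quotient of $\op{St}\otimes\mathbb{M}(\op{tr})\cong\op{Ind}^{\bf G}_{\bf T}\op{tr}=\Bbbk[\bf G/\bf T]$ via the surjection $\mathbb{M}(\op{tr})\twoheadrightarrow\op{St}$. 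Thus $\op{Hom}_{\Bbbk\bf G}(\op{St}\otimes\op{St},E(\theta)_J)$ embeds into $\op{Hom}_{\Bbbk\bf G}(\op{Ind}^{\bf G}_{\bf T}\op{tr},E(\theta)_J)\cong\op{Hom}_{\Bbbk\bf T}(\op{tr},E(\theta)_J)=E(\theta)_J^{\bf T}$ restricted to $\theta$-weight. For $\theta$ nontrivial this space records only the trivial $\bf T$-weight of $E(\theta)_J$; since $E(\theta)_J$ is a simple object whose $\bf T$-weights all lie in $W\cdot\theta$ (as $E(\theta)_J$ is a subquotient of $\mathbb{M}(\theta)$ and the $\bf T$-weights of $\mathbb{M}(\theta)$ are in $W\cdot\theta$), and $\theta$ is nontrivial means $\op{tr}\notin W\cdot\theta$ unless $\theta$ is $W$-conjugate to $\op{tr}$, which forces $\theta=\op{tr}$ since $\op{tr}$ is $W$-fixed — contradiction. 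Hence the Hom space vanishes. The main obstacle I anticipate is not this last weight argument but the first reduction step: proving cleanly that an arbitrary $M\in\mathscr{X}(\bf G)$ is a quotient of a finite direct sum of modules $\op{St}\otimes\mathbb{M}(\mu_j)$ with $\mu_j\in\mathbb{X}_{\bf T}(M)$. This requires understanding how the $\bf U$-module building blocks $\op{Ind}^{\bf U}_{{\bf U}'_w}\op{tr}$ in condition $(\star)$ assemble into $\bf G$-modules, and in particular that the generator $\xi_i$ of $\Bbbk\bf U\xi_i$ — a $\bf T$-eigenvector of weight $\mu_i$ — can be lifted to a $\bf G$-module map out of $\op{St}\otimes\mathbb{M}(\mu_i)$, which is not automatic and may require the full strength of the highest-weight structure of $\mathscr{X}(\bf G)$ from \cite{D3} or a careful induction on the Bruhat order.
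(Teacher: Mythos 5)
First, note that the statement you are asked to prove is left as a \emph{conjecture} in the paper: the author proves it only for ${\bf G}=SL_2(\bar{\mathbb{F}}_q)$, and there by a long explicit computation (the decomposition $\op{St}\otimes\op{St}=V_+\oplus V_-$ together with Propositions 4.4 and 4.5). So a correct general argument would be genuinely new; unfortunately yours has a fatal gap at its central step. You use ``the surjection $\mathbb{M}(\op{tr})\twoheadrightarrow\op{St}$'' to present $\op{St}\otimes\op{St}$ as a quotient of $\op{St}\otimes\mathbb{M}(\op{tr})\cong\op{Ind}^{\bf G}_{\bf T}\op{tr}$. No such surjection exists in this infinite-group setting: $\op{St}=\mathbb{M}(\op{tr})_I$ is a \emph{submodule} of $\mathbb{M}(\op{tr})$, and the unique simple quotient of $\mathbb{M}(\op{tr})=\mathbb{M}(\op{tr})_\emptyset$ is $E(\op{tr})_\emptyset$, which is the trivial module, not $\op{St}$ (the paper itself invokes ``$E(\lambda)_K$ is the unique simple quotient of $\mathbb{M}(\lambda)_K$'' in the proof that Conjecture 3.3 implies Conjecture 3.2; and for $SL_2$ the module $\mathbb{M}(\op{tr})$ is listed among the indecomposables, so its submodule $\op{St}$ cannot split off as a quotient). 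Consequently $\op{St}\otimes\op{St}$ sits inside $\op{Ind}^{\bf G}_{\bf T}\op{tr}$ only as a submodule, and the vanishing of $\op{Hom}_{\Bbbk{\bf G}}(\op{Ind}^{\bf G}_{\bf T}\op{tr},E(\theta)_J)$ (which is indeed correct, and is exactly the already-established case $M=\op{St}$, $N=\mathbb{M}(\op{tr})$ of Proposition 3.4) gives no information about $\op{Hom}$ out of a submodule, since homomorphisms defined on a submodule need not extend.

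The deeper obstruction is that no Frobenius-reciprocity/weight argument of the kind you propose can work here, because $\op{St}\otimes\op{St}$ is not generated by its ${\bf T}$-weight vectors. Already for $SL_2$ one has $\op{St}\otimes\op{St}=V_+\oplus V_-$ with $V_-=\Bbbk{\bf B}\rho_{1,0}$ containing no nonzero ${\bf T}$-eigenvector at all ($h(t)\rho_{x,y}=\rho_{t^2x,t^2y}$ permutes the basis freely), so $\mathbb{X}_{\bf T}(\op{St}\otimes\op{St})=\{\op{tr}\}$ detects only $V_+$ and is completely blind to $V_-$; yet $V_-$ does admit simple quotients (Remark 4.6), and showing none of them is an $E(\theta)_J$ requires the counting argument with $\underline{\mathfrak{D}_{q^n}}$ and the growth in $n$ carried out in Proposition 4.5. (If you instead try to salvage the reduction by mapping \emph{onto} $\op{St}$ from $\op{Ind}^{\bf G}_{\bf T}\op{tr}$ --- which does exist since $\op{St}^{\bf T}\neq 0$ --- you end up needing $\op{Hom}_{\bf T}(\op{Res}_{\bf T}\op{Ind}^{\bf G}_{\bf T}\op{tr},E(\theta)_J)=0$, and this space is far from zero.) The same difficulty undermines the reduction sketched in your first two paragraphs. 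In short, your argument proves only the part of the conjecture that the paper already covers by Proposition 3.4, and the genuinely conjectural content --- controlling the weight-vector-free part of $\op{St}\otimes\op{St}$ --- is untouched.
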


Note that Conjecture \ref{specialconjecture} is a special case of   Conjecture \ref{Homzero}. Now we give the proof of Conjecture  \ref{Homzero} assuming Conjecture \ref{specialconjecture} is valid. We have a simple observation which is useful for later discussion. For any $M\in \mathscr{X}({\bf G}) $ with the form $M=\displaystyle \bigoplus_{i=1}^m \Bbbk {\bf U} \xi_i $,  we see that  $\displaystyle \bigoplus_{i=1}^m \Bbbk {\bf U}_{w_J} \xi_i$ is a $\Bbbk {\bf G}_{J}$-module by \cite[Proposition 4.11]{D3}. We denote by $\mathfrak{P}_J(M)= \displaystyle \bigoplus_{i=1}^m \Bbbk {\bf U}_{w_J} \xi_i \in  \mathscr{X}({\bf G}_J)$.

\begin{proof}[Proof of (Conjecture \ref{specialconjecture} $\Rightarrow$ Conjecture \ref{Homzero}). ]  In this proof, we assume that Conjecture \ref{specialconjecture} is valid.  Note that we have got all the simple objects in $\mathscr{X}({\bf G})$. Therefore we just need to  show that $$\op{Hom}_{\Bbbk \bf G}(E(\lambda)_K \otimes E(\mu)_L,  E(\theta)_J)= 0,$$
 where $\mathbb{X}_{\bf T}(E(\theta)_J) \cap  \mathbb{X}_{\bf T}(E(\lambda)_K \otimes E(\mu)_L)= \emptyset$. Since $E(\lambda)_K$ is the unique simple quotient of $\mathbb{M}(\lambda)_K$ and $\mathbb{X}_{\bf T}(E(\lambda)_K)= \mathbb{X}_{\bf T}(\mathbb{M}(\lambda)_K)$, it is enough to show that
$$\op{Hom}_{\Bbbk \bf G}(\mathbb{M}(\lambda)_K \otimes \mathbb{M}(\mu)_L,  E(\theta)_J)=0.$$

Now suppose that $\op{Hom}_{\Bbbk \bf G}(\mathbb{M}(\lambda)_K \otimes \mathbb{M}(\mu)_L,  E(\theta)_J)\ne 0.$ Let $\psi$ be a nonzero homomorphism.  By Proposition \ref{MJ=KUW},  the $\Bbbk {\bf G}$-module $\mathbb{M}(\lambda)_K$ has the form $$\mathbb{M}(\lambda)_K=\sum_{w\in W^K}\Bbbk{\bf U}_{w_Kw^{-1}}\dot{w}\eta(\lambda)_K.$$
So there exists an element $x_1w_1\eta(\lambda)_K \otimes x_2w_2\eta(\mu)_L\in  \mathbb{M}(\lambda)_K \otimes \mathbb{M}(\mu)_L$ such that  $\psi(x_1w_1\eta(\lambda)_K \otimes x_2w_2\eta(\mu)_L) \ne 0$, which implies that  $\psi(x\eta(\lambda)_K \otimes w \eta(\mu)_L) \ne 0$ for some $x \in {\bf U}_{w_K}$ and $w\in W$.
 We claim  that $\psi(x\eta(\lambda)_K \otimes w \eta(\mu)_L) \in  \mathfrak{P}_K(E(\theta)_J).$ Indeed, noting that $\displaystyle E(\theta)_J=\sum_{w\in Z_J(\theta)}\Bbbk {\bf U}_{w_Jw^{-1}}\dot{w}C(\theta)_J$ by Proposition \ref{DesEJ}, we could write
 $$\xi= \psi(x\eta(\lambda)_K \otimes w \eta(\mu)_L)=\sum_{w\in Z_J(\theta), u\in{\bf U}_{w_Jw^{-1}} }a_{w,u} u\dot{w}C(\theta)_J. $$
Denote  $A=\{u\in {\bf U}_{w_Jw^{-1}} \mid a_{w,u} \ne 0\}$.   If $u_0\in A$ is not in ${\bf U}_K$, then there exists $t\in \displaystyle \bigcap_{i\in K} \op{Ker} \alpha_i$ such that $tu_0t^{-1} \notin A$. For such $t$,  we have $txt^{-1}=x$ and
thus we see that
\begin{align}\label{eq3.1}
\psi(t(x\eta(\lambda)_K \otimes w \eta(\mu)_L))= \lambda(t)\mu^w(t) \psi(x\eta(\lambda)_K \otimes w \eta(\mu)_L).
\end{align}
However, noting that $tu_0t^{-1} \notin A$, so $t\xi $  is not a scalar of  $\xi$ any more and we get a contradiction. The claim is proved. Let $M= \psi(\Bbbk {\bf U}_{w_K} \eta(\lambda)_K \otimes  \mathfrak{P}_K(\mathbb{M}(\mu)_L )$ be the $\Bbbk {\bf G}_K$-submodule of
$\mathfrak{P}_K(E(\theta)_J)$.  So we get
\begin{align}\label{eq3.101}
\op{Hom}_{\Bbbk {\bf G}_K}(\Bbbk {\bf U}_{w_K} \eta(\lambda)_K  \otimes \mathfrak{P}_K(\mathbb{M}(\mu)_L), M)\ne 0.
\end{align}
We claim that in this equation,
\begin{align}\label{eq3.2}
\mathbb{X}_{{\bf T}_K} (\Bbbk {\bf U}_{w_K} \eta(\lambda)_K  \otimes \mathfrak{P}_K(\mathbb{M}(\mu)_L) ) \cap \mathbb{X}_{{\bf T}_K}(M)=\emptyset.
\end{align}
Indeed, equation (\ref{eq3.1}) indicates that
\begin{align}\label{eq3.3}
\mathbb{X}_{{\bf T}_{I\setminus K} }(\Bbbk {\bf U}_{w_K}\eta(\lambda)_K\otimes \mathfrak{P}_K(\mathbb{M}(\mu)_L) ) = \mathbb{X}_{{\bf T}_{I\setminus K}}(M).
\end{align}
Since $\mathbb{X}_{\bf T}(E(\theta)_J) \cap  \mathbb{X}_{\bf T}(\mathbb{M}(\lambda)_K \otimes \mathbb{M}(\mu)_L)= \emptyset$
according to the initial setting, we have
\begin{align}\label{eq3.4}
\mathbb{X}_{\bf T} (\Bbbk {\bf U}_{w_K} \eta(\lambda)_K  \otimes \mathfrak{P}_K(\mathbb{M}(\mu)_L) ) \cap \mathbb{X}_{\bf T}(M)=\emptyset.
\end{align}
Combining (\ref{eq3.3}) and  (\ref{eq3.4}),  then we get (\ref{eq3.2}). It is easy to see that $\Bbbk {\bf U}_{w_K} \eta(\lambda)_K$ is the Steinberg module of $\Bbbk {\bf G}_K$.  Using  (\ref{eq3.101}) and  (\ref{eq3.2}),  we have $$\op{Hom}_{\Bbbk {\bf G}_K}(\op{St} \otimes M_1,  M_2)\ne 0,$$
where $M_1,M_2$ are simple objects in $\mathscr{X}({\bf G}_K)$ which satisfy that $\mathbb{X}_{{\bf T}_K}(\op{St} \otimes M_1 ) \cap \mathbb{X}_{{\bf T}_K}(M_2)=\emptyset$.

For the simplicity of the notation, we still consider the group ${\bf G}$ instead of ${\bf G}_K$. By the previous discussion,  we have
$$\op{Hom}_{\Bbbk {\bf G}}(\op{St} \otimes \mathbb{M}(\mu)_L,  E(\theta)_J)\ne 0,$$
 where $\mathbb{X}_{\bf T}(E(\theta)_J) \cap  \mathbb{X}_{\bf T}(\mathbb{M}(\mu)_L)= \emptyset$.
 Let $\varphi \in \op{Hom}_{\Bbbk {\bf G}}(\op{St} \otimes \mathbb{M}(\mu)_L,  E(\theta)_J)$ be a nonzero homomorphism. Thus there exists $u\in {\bf U}_{w_L}$ such that $\varphi (u\eta \otimes \eta(\mu)_L) \ne 0$. Similar to the previous discussion, we have $\varphi (u\eta \otimes \eta(\mu)_L)\in \mathfrak{P}_L(E(\theta)_J) $. Note that $\mathfrak{P}_L(\op{St})$  and $\Bbbk {\bf U}_{w_L} \eta(\mu)_L$ are  the Steinberg module of $\Bbbk {\bf G}_L$.  Thus  $\op{Hom}_{\Bbbk {\bf G}_L}(\op{St} \otimes \op{St},  N)\ne 0$ for some simple object $N\in \mathscr{X}({\bf G}_L)$, where $\op{tr} \notin  \mathbb{X}_{{\bf T}_{L}}(N)$. This is a contradiction to  Conjecture \ref{specialconjecture}.
Thus  Conjecture \ref{Homzero} is proved on the assumption  that Conjecture  \ref{specialconjecture}  holds.
\end{proof}

\section{The case ${\bf G}=SL_2(\bar{\mathbb{F}}_q)$}

In this section we consider ${\bf G}=SL_2(\bar{\mathbb{F}}_q)$ and show that  Conjecture \ref{Homfinite} and  Conjecture \ref{Homzero}  are valid. We assume that
$\text{char}\  \bar{\mathbb{F}}_q \ne 2$. Let  ${\bf T}$  be the diagonal matrices and ${\bf U}$ be the strictly upper unitriangular matrices in $SL_2(\bar{\mathbb{F}}_q)$.
 Let ${\bf B}$ be the Borel subgroup generated by ${\bf T}$ and ${\bf U}$, which is the upper triangular matrices in $SL_2(\bar{\mathbb{F}}_q)$.
As before, let ${\bf N}$ be the normalizer of ${\bf T}$ in ${\bf G}$ and  $W= {\bf N}/{\bf T}$ be the  Weyl group.  We set
$s=\begin{pmatrix}0 &1\\  -1 &0\end{pmatrix}$, which is the simple reflection of $W$.
There are two natural isomorphisms
$$h:   \bar{\mathbb{F}}^*_q\rightarrow{\bf T}, \   h(t)=\begin{pmatrix}t &0 \\0&t^{-1}\end{pmatrix}; \ \ \ \
\varepsilon : \bar{\mathbb{F}}_q\rightarrow{\bf U}, \ \  \varepsilon(x)= \begin{pmatrix}1&x\\0&1\end{pmatrix},$$
which satisfies $h(t)\varepsilon(x)h(t)^{-1}= \varepsilon(t^2x)$. The simple root
$\alpha: {\bf T}\rightarrow  \bar{\mathbb{F}}^*_q$ is given by $\alpha(h(t))= t^2$.
Moreover, one has that
\begin{align} \label{sus=xsty}
s \varepsilon(x) s=\displaystyle \varepsilon(-x^{-1}) s h(-x) \varepsilon(-x^{-1}).
\end{align}
For any $\theta\in \widehat{\bf T}$, we abbreviate $\theta(x)=\theta(h(x))$ for simplicity.

When ${\bf G}=SL_2(\bar{\mathbb{F}}_q)$, the category $\mathscr{X}({\bf G})$ has been studied in \cite{D2}. According to the results in \cite[Section 3]{D2}, all simple  objects in $\mathscr{X}({\bf G})$ are $\op{St}, \Bbbk_{\op{tr}}$ and $\mathbb{M}(\theta)$, where  $\theta \in \widehat{\bf T}$  is nontrivial.  Using Proposition \ref{conjholdforinduced}, to prove Conjecture \ref{Homfinite} and  Conjecture \ref{Homzero},  we just need to consider the simple quotients of   $\op{St} \otimes \op{St}$.  Note that  $\op{St}=\Bbbk {\bf U} \eta$, where $\eta=(1-s) {\bf 1}_{\op{tr}}$.
Using (\ref{sus=xsty}), it is easy to see that
\begin{align}\label{sueta}
s  \varepsilon(x) \eta= ( \varepsilon(-x^{-1})-1) \eta
\end{align}
for any $x\in \bar{\mathbb{F}}^*_q$.
For convenience, we denote $$\varepsilon(x) \otimes \varepsilon(y):= \varepsilon(x) \eta \otimes \varepsilon(y)\eta.$$
It is easy to see that $\{\varepsilon(x) \otimes \varepsilon(y)\mid x, y\in \bar{\mathbb{F}}_q\}$ is a basis of $\op{St} \otimes \op{St}$.

Now for any $x, y\in \bar{\mathbb{F}}_q$, we set
$$\omega_{x,y}= \frac{1}{2}(\varepsilon(x)\otimes \varepsilon(y) +\varepsilon(y) \otimes \varepsilon(x)).$$
Let $V_+$ be the space spanned  by $\{\omega_{x,y}\mid x, y\in \bar{\mathbb{F}}_q\}$.  Let
$$\rho_{x,y}=\frac{1}{2} (\varepsilon(x)\otimes \varepsilon(y) -\varepsilon(y) \otimes \varepsilon(x)),$$
where $x\ne y\in \bar{\mathbb{F}}_q$. Let $V_-$ be the space spanned by $\{\rho_{x,y}\mid x\ne y\in \bar{\mathbb{F}}_q\}$.
The following two lemmas are easily verified.

\begin{lemma} \label{omega}
For any $x, y\in \bar{\mathbb{F}}_q$, one has that
$$ h(t) \omega_{x,y}= \omega_{t^2x,t^2y}, \quad  \varepsilon(z)\omega_{x,y}=\omega_{z+x,z+y} ,$$
where $t\in \bar{\mathbb{F}}^*_q$ and $z\in \bar{\mathbb{F}}_q$. Moreover we get  $s\omega_{0,0}=\omega_{0,0}$ and
$$s\omega_{x,y}=  \omega_{-x^{-1},-y^{-1}}-\omega_{-x^{-1},0}-\omega_{-y^{-1},0}+\omega_{0,0}, \quad s\omega_{x,0}= - \omega_{-x^{-1},0}+\omega_{0,0}$$
for any  $x, y\in  \bar{\mathbb{F}}^*_q$.
\end{lemma}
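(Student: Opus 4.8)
\textbf{Proof plan for Lemma \ref{omega}.}
The plan is to verify each identity directly from the definitions, using \eqref{sueta} as the only nontrivial input. First I would record the two ``easy'' actions. Since $h(t)$ and $\varepsilon(z)$ both act diagonally on the tensor product $\op{St}\otimes\op{St}$, and since $h(t)\varepsilon(x)\eta = h(t)\varepsilon(x)h(t)^{-1}\,h(t)\eta = \varepsilon(t^2x)\eta$ (using $h(t)\varepsilon(x)h(t)^{-1}=\varepsilon(t^2x)$ and $h(t)\eta = \eta$, as $\eta$ is fixed by ${\bf T}$ because $\op{tr}$ is trivial), we get $h(t)(\varepsilon(x)\eta\otimes\varepsilon(y)\eta) = \varepsilon(t^2x)\eta\otimes\varepsilon(t^2y)\eta$, i.e. $h(t)\,\varepsilon(x)\otimes\varepsilon(y) = \varepsilon(t^2x)\otimes\varepsilon(t^2y)$. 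Symmetrizing gives $h(t)\omega_{x,y}=\omega_{t^2x,t^2y}$. Similarly $\varepsilon(z)\varepsilon(x)\eta = \varepsilon(z+x)\eta$, so $\varepsilon(z)\,\varepsilon(x)\otimes\varepsilon(y) = \varepsilon(z+x)\otimes\varepsilon(z+y)$, and symmetrizing yields $\varepsilon(z)\omega_{x,y}=\omega_{z+x,z+y}$. The fact $s\omega_{0,0}=\omega_{0,0}$ is immediate from $s\eta = -\eta$ (so $s(\eta\otimes\eta) = \eta\otimes\eta$) together with $\varepsilon(0)=1$.

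Next I would treat the action of $s$. The key point is that $s\,\varepsilon(x)\otimes\varepsilon(y) = (s\varepsilon(x)\eta)\otimes(s\varepsilon(y)\eta)$ and, by \eqref{sueta}, for $x\in\bar{\mathbb{F}}_q^*$ we have $s\varepsilon(x)\eta = (\varepsilon(-x^{-1})-1)\eta$, i.e.\ expressed in the $\varepsilon(\cdot)\eta$ notation, $s\,\varepsilon(x)\eta = \varepsilon(-x^{-1})\eta - \varepsilon(0)\eta$. Hence for $x,y\in\bar{\mathbb{F}}_q^*$,
$$
s\,\varepsilon(x)\otimes\varepsilon(y) = \bigl(\varepsilon(-x^{-1})-\varepsilon(0)\bigr)\otimes\bigl(\varepsilon(-y^{-1})-\varepsilon(0)\bigr)
= \varepsilon(-x^{-1})\otimes\varepsilon(-y^{-1}) - \varepsilon(-x^{-1})\otimes\varepsilon(0) - \varepsilon(0)\otimes\varepsilon(-y^{-1}) + \varepsilon(0)\otimes\varepsilon(0).
$$
Symmetrizing in $(x,y)$ and collecting terms (using $\omega_{0,a}=\omega_{a,0}$ and $\omega_{0,0}$) gives $s\omega_{x,y} = \omega_{-x^{-1},-y^{-1}} - \omega_{-x^{-1},0} - \omega_{-y^{-1},0} + \omega_{0,0}$. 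For the last formula, set $y=0$ in $s\,\varepsilon(x)\otimes\varepsilon(y)$; since $\varepsilon(0)=1$ and $s\eta = -\eta$, the second factor $s\varepsilon(0)\eta = -\eta = -\varepsilon(0)\eta$, so $s\,\varepsilon(x)\otimes\varepsilon(0) = -\varepsilon(-x^{-1})\otimes\varepsilon(0) + \varepsilon(0)\otimes\varepsilon(0)$; symmetrizing gives $s\omega_{x,0} = -\omega_{-x^{-1},0} + \omega_{0,0}$.

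I do not anticipate a genuine obstacle here: everything reduces to \eqref{sueta} plus bilinearity of the tensor product and the observation that $\eta$ is ${\bf T}$-fixed. The only place to be careful is the bookkeeping of the ``degenerate'' terms when one of $x,y$ is zero (so that $\varepsilon(-x^{-1})$ is undefined and must be replaced by the $\varepsilon(0)=1$ branch coming from $s\eta=-\eta$), and the symmetrization factor of $\tfrac12$; both are routine. Thus the lemma follows.
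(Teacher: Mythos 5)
Your proposal is correct and follows essentially the same route as the paper: compute $s(\varepsilon(x)\eta\otimes\varepsilon(y)\eta)$ term by term from \eqref{sueta} and symmetrize, with the $h(t)$ and $\varepsilon(z)$ actions being immediate from the diagonal action and ${\bf T}$-invariance of $\eta$. The paper's proof is just a terser version of this (it records only the generic case and calls the degenerate ones obvious), so your extra care with the $x=0$ or $y=0$ branches via $s\eta=-\eta$ is consistent with, and slightly more complete than, the original.
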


\begin{proof} For any  $x, y\in  \bar{\mathbb{F}}^*_q$,  using (\ref{sueta}), we have
\begin{align}\label{sxy}
s(\varepsilon(x) \otimes \varepsilon(y))=\varepsilon(-x^{-1})\otimes \varepsilon(-y^{-1})- \varepsilon(-x^{-1})\otimes 1-1\otimes \varepsilon(-y^{-1})+ 1\otimes1.
\end{align}
By the notation of $\omega_{x,y}$,   it is easy to see that
$$s\omega_{x,y}=  \omega_{-x^{-1},-y^{-1}}-\omega_{-x^{-1},0}-\omega_{-y^{-1},0}+\omega_{0,0}.$$
The other equations are obvious and the lemma is proved.
\end{proof}

\begin{lemma} \label{rho}
For any $x, y\in \bar{\mathbb{F}}_q$ and $x\ne y$, one has that
$$ h(t) \rho_{x,y}= \rho_{t^2x,t^2y}, \quad  \varepsilon(z)\rho_{x,y}=\rho_{z+x,z+y},$$
where $t\in \bar{\mathbb{F}}^*_q$ and $z\in \bar{\mathbb{F}}_q$.  Moreover we get
$$s\rho_{x,y}=\rho_{-x^{-1},-y^{-1}}-\rho_{-x^{-1},0}+ \rho_{-y^{-1},0},   \quad  s\rho_{x,0}=-\rho_{-x^{-1}, 0}$$
for any $x, y\in  \bar{\mathbb{F}}^*_q$ and $x\ne y$.
\end{lemma}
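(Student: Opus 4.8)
\emph{Proof strategy.} The plan is to follow the proof of Lemma~\ref{omega} essentially line by line, since $\rho_{x,y}$ is the antisymmetrization of $\varepsilon(x)\eta\otimes\varepsilon(y)\eta$ whereas $\omega_{x,y}$ is its symmetrization; only the sign bookkeeping and one boundary case differ. For the torus and unipotent actions I would first record that $h(t)\eta=\eta$ (because $h(t){\bf 1}_{\op{tr}}={\bf 1}_{\op{tr}}$ and $h(t)s{\bf 1}_{\op{tr}}=sh(t^{-1}){\bf 1}_{\op{tr}}=s{\bf 1}_{\op{tr}}$), whence $h(t)\varepsilon(x)\eta=\varepsilon(t^2x)\eta$ via $h(t)\varepsilon(x)h(t)^{-1}=\varepsilon(t^2x)$, and $\varepsilon(z)\varepsilon(x)\eta=\varepsilon(z+x)\eta$ via $\varepsilon(z)\varepsilon(x)=\varepsilon(z+x)$. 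Since $h(t)$ and $\varepsilon(z)$ act diagonally on $\op{St}\otimes\op{St}$, applying these to each tensor factor of $\rho_{x,y}$ yields $h(t)\rho_{x,y}=\rho_{t^2x,t^2y}$ and $\varepsilon(z)\rho_{x,y}=\rho_{z+x,z+y}$ at once.

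For the action of $s$ on $\rho_{x,y}$ with $x,y\in\bar{\mathbb{F}}_q^*$ and $x\ne y$, I would invoke the already-established identity~(\ref{sxy}) for $s(\varepsilon(x)\otimes\varepsilon(y))$, subtract the same identity with $x$ and $y$ interchanged, and divide by $2$. Writing $a=\varepsilon(-x^{-1})$, $b=\varepsilon(-y^{-1})$ and $e=\varepsilon(0)$ for brevity, the two $e\otimes e$ summands cancel and the difference reduces to $(a\otimes b-b\otimes a)-(a\otimes e-e\otimes a)-(e\otimes b-b\otimes e)$; recognizing $\tfrac12(a\otimes b-b\otimes a)=\rho_{-x^{-1},-y^{-1}}$, $\tfrac12(a\otimes e-e\otimes a)=\rho_{-x^{-1},0}$ and $\tfrac12(e\otimes b-b\otimes e)=\rho_{0,-y^{-1}}=-\rho_{-y^{-1},0}$ then gives $s\rho_{x,y}=\rho_{-x^{-1},-y^{-1}}-\rho_{-x^{-1},0}+\rho_{-y^{-1},0}$, as claimed.

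The only case not handled by~(\ref{sxy}) is $\rho_{x,0}$, since that formula assumes both arguments nonzero. Here I would argue directly from $s\eta=-\eta$ (which follows from $s\eta=(s-s^2){\bf 1}_{\op{tr}}=s{\bf 1}_{\op{tr}}-{\bf 1}_{\op{tr}}=-\eta$) together with $s\varepsilon(x)\eta=(\varepsilon(-x^{-1})-1)\eta$ from~(\ref{sueta}): applying $s$ to each factor gives $s(\varepsilon(x)\eta\otimes\eta)=-\varepsilon(-x^{-1})\eta\otimes\eta+\eta\otimes\eta$ and $s(\eta\otimes\varepsilon(x)\eta)=-\eta\otimes\varepsilon(-x^{-1})\eta+\eta\otimes\eta$, so the $\eta\otimes\eta$ contributions cancel in the difference and $s\rho_{x,0}=-\tfrac12(\varepsilon(-x^{-1})\eta\otimes\eta-\eta\otimes\varepsilon(-x^{-1})\eta)=-\rho_{-x^{-1},0}$. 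I expect no genuine obstacle in any of this: the lemma is a routine computation mirroring Lemma~\ref{omega}, and the only steps demanding a little care are respecting the antisymmetry of $\rho$ when collecting terms and isolating the boundary case $y=0$, where one falls back on~(\ref{sueta}) and $s\eta=-\eta$ in place of~(\ref{sxy}).
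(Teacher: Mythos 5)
Your proposal is correct and follows essentially the same route as the paper: the first two identities are the routine diagonal actions of $h(t)$ and $\varepsilon(z)$, and the $s$-action formulas are obtained by antisymmetrizing (\ref{sxy}), with the boundary case $s\rho_{x,0}$ handled directly from (\ref{sueta}) and $s\eta=-\eta$. The computations all check out, including the sign $\rho_{0,-y^{-1}}=-\rho_{-y^{-1},0}$.
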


\begin{proof}  The first two equations are easy. The last two equations follow from (\ref{sxy}) and the setting of $\rho_{x,y}$.
\end{proof}

Using these two lemmas, we see that $V_+$ and $V_-$ are  $\Bbbk {\bf G}$-submodules of $\op{St} \otimes \op{St}$.
Moreover, it is easy to see that   $\op{St} \otimes \op{St} =V_+ \oplus V_-$. The following proposition tells us that this is an indecomposable direct sum decomposition  of  $\op{St} \otimes \op{St}$.

\begin{proposition} \label{indecomposable}
One has that $V_+= \Bbbk {\bf G} \omega_{0,0}$ and  $V_-= \Bbbk {\bf B} \rho_{0, 1}$.
The $\Bbbk {\bf G}$-modules $V_+$ and $V_-$ are indecomposable.
\end{proposition}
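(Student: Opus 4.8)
The plan is to verify the two generation statements first, then deduce indecomposability from them. For $V_+ = \Bbbk{\bf G}\,\omega_{0,0}$: by Lemma~\ref{omega}, applying $\varepsilon(z)$ to $\omega_{0,0}$ gives $\omega_{z,z}$, and applying $h(t)$ does not help produce off-diagonal pairs; the essential move is to apply $s$. From $s\omega_{x,0} = -\omega_{-x^{-1},0} + \omega_{0,0}$ we see that $\Bbbk{\bf G}\,\omega_{0,0}$ contains every $\omega_{x,0}$ (take $z = x$ to get $\omega_{x,x}$, then... actually one should instead start from $\omega_{0,0}$, apply $\varepsilon(z)$ to reach $\omega_{z,z}$, apply $s$ to reach a combination involving $\omega_{-z^{-1},-z^{-1}}$ and $\omega_{-z^{-1},0}$, and so on). The cleanest route: show $\Bbbk{\bf G}\,\omega_{0,0} \ni \omega_{x,0}$ for all $x$, then use $\varepsilon(z)\omega_{x,0} = \omega_{z+x, z}$ to get all $\omega_{a,b}$ with $a \neq b$ arising as translates, and combine with the diagonal ones $\omega_{c,c} = \varepsilon(c)\omega_{0,0}$. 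So the first sub-step is a small linear-algebra argument inside the span of finitely many $\omega$'s to extract $\omega_{x,0}$ from the relations in Lemma~\ref{omega}. For $V_- = \Bbbk{\bf B}\,\rho_{0,1}$: here only ${\bf B} = {\bf T}\ltimes{\bf U}$ is needed, which is the striking point. By Lemma~\ref{rho}, $\varepsilon(z)\rho_{0,1} = \rho_{z,z+1}$ and $h(t)\rho_{z,z+1} = \rho_{t^2z, t^2(z+1)}$; as $z$ ranges over $\bar{\mathbb F}_q$ and $t$ over $\bar{\mathbb F}_q^*$, the pairs $(t^2 z, t^2 z + t^2)$ range over \emph{all} ordered pairs $(x,y)$ with $x \neq y$, since given $x \neq y$ one solves $t^2 = y - x$ (possible as $\bar{\mathbb F}_q$ is algebraically closed) and $z = x/t^2$. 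Hence $\Bbbk{\bf B}\,\rho_{0,1}$ already contains the full spanning set $\{\rho_{x,y} : x \neq y\}$ of $V_-$, giving the reverse inclusion; the forward inclusion is clear since $\rho_{0,1} \in V_-$ and $V_-$ is a $\Bbbk{\bf G}$-module.

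For indecomposability of $V_-$: I would locate a distinguished line fixed pointwise (up to scalar) that any direct summand must contain. A natural candidate is to look at ${\bf T}$-weight spaces. From $h(t)\rho_{x,y} = \rho_{t^2 x, t^2 y}$ one checks that the only ${\bf T}$-eigenvectors in $V_-$ lie in a small space — indeed $(V_-)_{\bf T}$ should be spanned by $\rho_{0,y}$-type combinations, or more precisely one computes $\mathbb{X}_{\bf T}(V_-)$ and shows each weight space is one-dimensional (or otherwise very constrained). If $V_- = V' \oplus V''$ as $\Bbbk{\bf G}$-modules, each summand is a $\Bbbk{\bf G}$-module hence $\Bbbk{\bf B}$-module, but $V_-$ is generated as a $\Bbbk{\bf B}$-module by the single vector $\rho_{0,1}$, which lies in one of the two summands, say $\rho_{0,1}\in V'$; then $V' \supseteq \Bbbk{\bf B}\,\rho_{0,1} = V_-$, forcing $V'' = 0$. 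This cyclic-generation argument is the quick way and bypasses weight-space bookkeeping entirely. The same idea handles $V_+$: if $V_+ = V' \oplus V''$, then $\omega_{0,0}$ lies in one summand, and since $V_+ = \Bbbk{\bf G}\,\omega_{0,0}$, that summand is everything.

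The main obstacle is the first generation claim $V_+ = \Bbbk{\bf G}\,\omega_{0,0}$: unlike the $V_-$ case, the action of ${\bf B}$ alone on $\omega_{0,0}$ produces only the diagonal vectors $\omega_{z,z}$, so one genuinely must use $s$, and the formula $s\omega_{x,y} = \omega_{-x^{-1},-y^{-1}} - \omega_{-x^{-1},0} - \omega_{-y^{-1},0} + \omega_{0,0}$ mixes several basis vectors, so extracting individual $\omega_{x,0}$'s requires a careful finite induction or a clever choice of linear combinations. Concretely I would: (i) from $\omega_{0,0}$ produce $\omega_{z,z}$ for all $z$ via $\varepsilon(z)$; (ii) apply $s$ to $\omega_{z,z}$, getting (by the case $x=y$) $\omega_{-z^{-1},-z^{-1}} - 2\omega_{-z^{-1},0} + \omega_{0,0}$, which since the first and third terms are already in the module yields $\omega_{-z^{-1},0}$, hence all $\omega_{a,0}$ with $a \neq 0$, hence all $\omega_{a,0}$; (iii) translate by $\varepsilon(z)$ to get $\omega_{z+a,z}$, i.e. all off-diagonal $\omega_{x,y}$; together with the diagonal ones from (i) this exhausts the spanning set of $V_+$. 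Step (ii) is the crux and is the only place a nonobvious cancellation is used. Everything else — the reverse inclusions, the direct-sum decomposition $\op{St}\otimes\op{St} = V_+ \oplus V_-$ (already asserted before the proposition), and the deduction of indecomposability — is formal.
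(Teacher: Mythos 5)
Your two generation statements are proved correctly and by essentially the same computations as the paper: for $V_+$ the crux is exactly the cancellation in $s\omega_{z,z}=\omega_{-z^{-1},-z^{-1}}-2\omega_{-z^{-1},0}+\omega_{0,0}$, which yields $\omega_{-z^{-1},0}$ and hence (after translating by $\varepsilon(z)$) all of $V_+$; for $V_-$ the identity $\rho_{x,y}=\varepsilon(y)h((x-y)^{1/2})\rho_{1,0}$ is the same observation you make with $t^2=y-x$.

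The indecomposability argument, however, has a fatal gap. Your ``quick way'' rests on the claim that if a cyclic module $M=\Bbbk {\bf G}v$ decomposes as $M'\oplus M''$, then the generator $v$ lies in $M'$ or $M''$. This is false: the generator decomposes as $v=v'+v''$ and need not lie in either summand. (The regular representation of $\mathbb{Z}/2\mathbb{Z}$ over $\Bbbk$ is cyclic, generated by $1=e_++e_-$, yet decomposable; cyclicity never implies indecomposability.) For $V_+$ the argument can be repaired, and this is what the paper does: $\Bbbk\omega_{0,0}$ is the \emph{unique} ${\bf T}$-stable line in $V_+$ (every ${\bf T}$-orbit on a basis vector other than $\omega_{0,0}$ is infinite), so any endomorphism scales $\omega_{0,0}$ and, by cyclicity, is scalar; hence $\op{End}_{\Bbbk{\bf G}}(V_+)\cong\Bbbk$. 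But for $V_-$ the weight-space fallback you gesture at is empty: since $h(t)\rho_{x,y}=\rho_{t^2x,t^2y}$ and no pair $(x,y)$ with $x\ne y$ has finite ${\bf T}$-orbit, one gets $(V_-)_{\bf T}=0$, so there is no distinguished line for a summand to contain. Indecomposability of $V_-$ is the real content of the proposition, and the paper's proof of $\op{End}_{\Bbbk{\bf G}}(V_-)\cong\Bbbk$ is a substantial computation: one expands $\varphi(\rho_{1,0})$ in the basis, extracts linear constraints on the coefficients from the relations $\rho_{1,0}+h(\sqrt{-1})\varepsilon(-1)\rho_{1,0}=0$ and $h(\sqrt{-1})s\rho_{1,0}+\rho_{1,0}=0$, and then runs a counting/growth argument over the finite levels $\mathbb{F}_{q^n}$ using the elements $\zeta_n=\sum_{x\in\mathbb{F}^*_{q^n}}\rho_{x,0}$ to force all remaining coefficients to vanish. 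Your proposal contains no substitute for this step, so the indecomposability of $V_-$ is not established.
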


\begin{proof}
Let $V'_+=\Bbbk {\bf G}  \omega_{0,0}$. Then $\omega_{z,z} \in V'_+$ for any $z\in \bar{\mathbb{F}}^*_q$. Using Lemma \ref{omega}, we see that
$$s\omega_{z,z}=  \omega_{-z^{-1},-z^{-1}}-2\omega_{-z^{-1},0}+\omega_{0,0}  \in V'_+,$$
and thus $\omega_{-z^{-1},0}\in V'_+$ for any $z\in \bar{\mathbb{F}}^*_q$. So we get  $V_+=V'_+= \Bbbk {\bf G}  \omega_{0,0}$.
Using Lemma \ref{rho}, for any $x, y\in \bar{\mathbb{F}}_q$ and $x\ne y$,  we get
$$\rho_{x,y}=\varepsilon(y) h((x-y)^{\frac{1}{2}})\rho_{1,0} $$
and thus $V_-= \Bbbk {\bf B} \rho_{0, 1}$. The first assertion is proved.

To prove the second assertion, it is enough to show that $\op{End}_{\Bbbk {\bf G}} (V_+)\cong \op{End}_{\Bbbk {\bf G}} (V_-)\cong \Bbbk$.
Note that $\Bbbk  \omega_{0,0}$ is the unique $\bf T$-stable line in $V_+$. It is easy to see that $\op{End}_{\Bbbk {\bf G}} (V_+)\cong  \Bbbk$ and thus $V_+$ is indecomposable.

Now we consider $\op{End}_{\Bbbk {\bf G}} (V_-)$. Let $\varphi \in \op{End}_{\Bbbk {\bf G}} (V_-)$ and we write
$$\xi= \varphi(\rho_{1,0})= \sum_{u\in \bar{\mathbb{F}}^*_q \setminus \{1\} } a_u \rho_{u,1} +\sum_{v\in \bar{\mathbb{F}}^*_q } b_v \rho_{v,0} +\sum_{x,y\in \bar{\mathbb{F}}^*_q  \setminus \{1\}}c_{x,y} \rho_{x,y},$$
where $a_u, b_v, c_{x,y}\in \Bbbk. $
Noting that $\rho_{1,0}+ h(\sqrt{-1})\varepsilon(-1) \rho_{1,0}=0$ by Lemma \ref{rho}, then
$\xi+ h(\sqrt{-1})\varepsilon(-1) \xi=0$ and thus
we get
\begin{align}\label{Eq1}
c_{x,y}+c_{1-x,1-y}=0,  \quad a_z+b_{1-z}=0
\end{align}
for any $x,y,z\in   \bar{\mathbb{F}}^*_q \setminus \{1\}$.  On the other hand, it is easy to check that $h(\sqrt{-1})s \rho_{1,0}+\rho_{1,0}=0$  by Lemma \ref{rho}. Therefore $h(\sqrt{-1})s \xi+\xi=0$  and by some easy computation,  we get
\begin{align}\label{Eq2}
a_{z}+a_{z^{-1}}=0,  \quad c_{x^{-1},y^{-1}}+c_{x,y}=0
\end{align}
for any $x,y,z\in   \bar{\mathbb{F}}^*_q \setminus \{1\}$
 and
 \begin{align}\label{Eq3}
b_{u^{-1}}-b_{u}-a_{u}+\sum_{x} c_{x,u}-\sum_{y}c_{u,y}=0
\end{align}
 for any $u\in   \bar{\mathbb{F}}^*_q \setminus \{1\}$.
Using (\ref{Eq1}) and (\ref{Eq2}), we see that
 \begin{align}\label{Eq3.01}
b_x+ b_{\frac{x}{x-1}}=0
\end{align}
 for any $x\in   \bar{\mathbb{F}}^*_q \setminus \{1\}$.

For any integer $n\in \mathbb{N}$,  we choose $\mathfrak{D}_{q^n}\subset {\bf T}$ such that $\alpha:\mathfrak{D}_{q^n}\rightarrow  \mathbb{F}^*_{q^n}$ is a bijection. Denote  $\displaystyle \zeta_n= \sum_{x\in  \mathbb{F}^*_{q^n}} \rho_{x,0}= \underline{\mathfrak{D}_{q^n}} \rho_{1,0}$. Then we have
\begin{align}\label{Eq4}
s \varepsilon(-u) \zeta_n=(q^n+1) \rho_{u^{-1},0}+ (\varepsilon(u^{-1})-1)\zeta_n
\end{align}
for any $u\in  \mathbb{F}^*_{q^n}$ by some direct computation. Noting that $\rho_{u^{-1},0}=h(u^{-\frac{1}{2}}) \rho_{1,0}$, thus
 the element $\xi= \varphi(\rho_{1,0})$ also satisfies that
 \begin{align}\label{Eq5}
s \varepsilon(-u)  \underline{\mathfrak{D}_{q^n}} \xi =(q^n+1) h(u^{-\frac{1}{2}}) \xi+ (\varepsilon(u^{-1})-1) \underline{\mathfrak{D}_{q^n}} \xi
\end{align}
for any $u\in  \mathbb{F}^*_{q^n}$ by (\ref{Eq4}).
Using (\ref{Eq2}) and  (\ref{Eq3.01}), the element $\underline{\mathfrak{D}_{q^n}} \xi $ can be written as
$$\underline{\mathfrak{D}_{q^n}} \xi =b_1 \sum_{x\in  \mathbb{F}^*_{q^n}} \rho_{x,0}+ \sum^k_{i=1} d_i \sum_{t\in \mathbb{F}^*_{q^n}} \rho_{tz_i, t}$$
for some $z_1,z_2\dots, z_k$, where $z_i \ne z_j$ and $z_iz_j\ne 1$.  We need $z_iz_j \ne 1$ since
$$ \sum_{t\in \mathbb{F}^*_{q^n}} \rho_{tz, t}=-\sum_{t\in \mathbb{F}^*_{q^n}} \rho_{tz^{-1}, t} $$
for any $z\in  \mathbb{F}^*_{q^n}$.
For each fixed $i\in \{1,2,\cdots, k\}$, we see that
 \begin{align}\label{Eq6}
s \varepsilon(-u)\sum_{t\in \mathbb{F}^*_{q^n}} \rho_{tz_i, t}=\sum_{t\ne u,  uz^{-1}_i}  \rho_{ (u-tz_i)^{-1}, (u-t)^{-1}},
\end{align}
and
 \begin{align}\label{Eq7}
 \varepsilon(u^{-1}) \sum_{t\in \mathbb{F}^*_{q^n}} \rho_{tz_i, t}= \sum_{t\ne -u^{-1},  -(uz_i)^{-1}} \rho_{tz_i+u^{-1},  t+u^{-1}}.
\end{align}
We prove that $d_i=0$ for $i=1,2,\dots,k$. Suppose that $d_1\ne 0$. We compare the elements with the form $\rho_{uz_1, u}$ on both sides of
(\ref{Eq5}), where $u\in  \mathbb{F}^*_{q^n}$.  For convenience, for any $v\in V_-$, we denote the cardinality of  the elements with the form $\ \rho_{uz_1, u}$ appearing in $v\in V_-$ by $\Xi(v)$.
Noting that
$$ s \varepsilon(-u) \rho_{x,0}= \rho_{(u-x)^{-1},u^{-1}}- \rho_{(u-x)^{-1},0}+  \rho_{u^{-1},0}$$
for any $x\ne u$ and using (\ref{Eq6}), we see that $\Xi(s \varepsilon(-u)  \underline{\mathfrak{D}_{q^n}} \xi)$ is less than the cardinality  of the set
$$\{x\in \mathbb{F}^*_{q^n} \mid 1-xu^{-1}=z^{\pm 1}_1\}\cup \bigcup_{i=1}^k \{t\in \mathbb{F}^*_{q^n} \mid (u-t)(u-tz_i)^{-1} =z^{\pm 1}_1\}$$
and thus  $\Xi(s \varepsilon(-u)  \underline{\mathfrak{D}_{q^n}} \xi) \leq 2k+2.$  Using (\ref{Eq7}) and the same discussion, we see that
$\Xi ( \varepsilon(u^{-1}) \underline{\mathfrak{D}_{q^n}} \xi ) \leq 2k+2$.  Noting that $\Xi ( \underline{\mathfrak{D}_{q^n}} \xi ) =q^n-1$ and $\Xi(\xi)$  is a fixed number determined by $\xi$, so  the cardinality of  the elements with the form $\ \rho_{uz_1, u}$ appearing on both sides of
(\ref{Eq5}) are different since the integer $n\in \mathbb{N}$ can be replaced by any integer  $m>n$ with $n|m$.
We get a contradiction  and thus  $d_i=0$ for $i=1,2,\dots,k$.
Substitute this result into (\ref{Eq5}), we see that  $a_x=0$ and $c_{x,y}=0$ for any $x,y\in \bar{\mathbb{F}}^*_q \setminus \{1\}$. Using (\ref{Eq1}),  we get $b_x=0$ for any $x\in \bar{\mathbb{F}}^*_q \setminus \{1\}$. Thus $\xi$ must be $b_1\rho_{1,0}$ for some $b_1\in \Bbbk$.
Now we get $\op{End}_{\Bbbk {\bf G}} (V_-)\cong  \Bbbk$ and thus $V_-$ is indecomposable.
\end{proof}

In the following two propositions, we consider the simple quotients of $V_+$ and $V_-$.

\begin{proposition}\label{Vplus}
  The trivial $\Bbbk {\bf G}$-module $\Bbbk_{\op{tr}}$ is the unique simple quotient of $V_+$.
\end{proposition}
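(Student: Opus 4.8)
The plan is to reduce everything to Frobenius reciprocity plus one explicit construction, using the classification of the simple objects of $\mathscr{X}({\bf G})$ for ${\bf G}=SL_2(\bar{\mathbb{F}}_q)$ recalled above: $\op{St}$, $\Bbbk_{\op{tr}}$, and the $\mathbb{M}(\theta)$ with $\theta\neq\op{tr}$. First I would observe that $\omega_{0,0}=\eta\otimes\eta$ is fixed by ${\bf T}$ and by $s$ (Lemma \ref{omega}) and generates $V_+$ (Proposition \ref{indecomposable}); therefore $g\otimes 1\mapsto g\,\omega_{0,0}$ is a well-defined surjection $\pi\colon\op{Ind}^{\bf G}_{\bf T}\Bbbk_{\op{tr}}\twoheadrightarrow V_+$ of $\Bbbk{\bf G}$-modules with $(1-s)\otimes 1\in\ker\pi$. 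Composing with $\pi$ and using the induction--restriction adjunction, for every $\Bbbk{\bf G}$-module $L$ one gets an injection $\op{Hom}_{\Bbbk{\bf G}}(V_+,L)\hookrightarrow\op{Hom}_{\Bbbk{\bf G}}(\op{Ind}^{\bf G}_{\bf T}\Bbbk_{\op{tr}},L)\cong L_{\op{tr}}$, and since any homomorphism out of $\op{Ind}^{\bf G}_{\bf T}\Bbbk_{\op{tr}}$ that factors through $\pi$ must kill $(1-s)\otimes 1$, its image lies in $\{\,v\in L_{\op{tr}}\mid sv=v\,\}$.

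Then I would treat the two nontrivial simple objects. For $L=\mathbb{M}(\theta)$ with $\theta\neq\op{tr}$: from the basis $\{{\bf 1}_\theta\}\cup\{\varepsilon(x)s{\bf 1}_\theta\mid x\in\bar{\mathbb{F}}_q\}$ and the relation $h(t)\,\varepsilon(x)s{\bf 1}_\theta=\theta^{s}(t)\,\varepsilon(t^{2}x)s{\bf 1}_\theta$ one sees that $x=0$ gives the only finite ${\bf T}$-orbit, so $\mathbb{X}_{\bf T}(\mathbb{M}(\theta))=\{\theta,\theta^{s}\}$, which does not contain $\op{tr}$ (as $\theta^{s}=\op{tr}$ would force $\theta=\op{tr}$); hence $L_{\op{tr}}=0$ and $\op{Hom}_{\Bbbk{\bf G}}(V_+,L)=0$. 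For $L=\op{St}=\Bbbk{\bf U}\eta$: since $\eta$ is ${\bf T}$-fixed, $h(t)\,\varepsilon(x)\eta=\varepsilon(t^{2}x)\eta$, and the same orbit argument gives $\op{St}_{\op{tr}}=\Bbbk\eta$; but $s\eta=-\eta$ (because $\eta=(1-s){\bf 1}_{\op{tr}}$ and $s^{2}{\bf 1}_{\op{tr}}=h(-1){\bf 1}_{\op{tr}}={\bf 1}_{\op{tr}}$), so the $s$-fixed part of $\op{St}_{\op{tr}}$ is zero and $\op{Hom}_{\Bbbk{\bf G}}(V_+,\op{St})=0$.

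It remains to produce a nonzero map $V_+\to\Bbbk_{\op{tr}}$, and for this I would use the ${\bf G}$-invariant symmetric bilinear form $B$ on $\op{St}$ defined on the basis by $B(\varepsilon(x)\eta,\varepsilon(y)\eta)=1+\delta_{x,y}$. Invariance under ${\bf T}$ and ${\bf U}$ is clear, as both act on the index set by bijections preserving the partition of pairs $(x,y)$ into the diagonal and its complement. For invariance under $s$ one substitutes $s\varepsilon(x)\eta=(\varepsilon(-x^{-1})-1)\eta$ and $s\eta=-\eta$ from (\ref{sueta}) and checks the cases; each reduces to the single relation $B(\varepsilon(x)\eta,\varepsilon(x)\eta)=2\,B(\varepsilon(x)\eta,\varepsilon(y)\eta)$ for $x\neq y$, i.e.\ $2=2\cdot 1$. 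As ${\bf G}=\langle{\bf T},{\bf U},s\rangle$ by Bruhat, $B$ is ${\bf G}$-invariant, so $u\otimes v\mapsto B(u,v)$ gives a $\Bbbk{\bf G}$-map $\op{St}\otimes\op{St}\to\Bbbk_{\op{tr}}$; being symmetric it kills $V_-$ and factors through $V_+$, and $B(\eta,\eta)=2\neq0$ shows it is nonzero on $\omega_{0,0}$, hence surjective onto $\Bbbk_{\op{tr}}$. Since $\op{Hom}_{\Bbbk{\bf G}}(V_+,\Bbbk_{\op{tr}})\hookrightarrow(\Bbbk_{\op{tr}})_{\op{tr}}=\Bbbk$ it is one-dimensional; together with the two vanishing statements, this shows $\Bbbk_{\op{tr}}$ is the unique simple quotient of $V_+$ (occurring with multiplicity one in its head).

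I expect the only non-formal step to be the construction and checking of $B$: ${\bf T}$- and ${\bf U}$-invariance already force $B$ to take just two values, so the real content is that $s$-invariance fixes their ratio at $2$ and that this single relation suffices for full ${\bf G}$-invariance; everything else --- the passage through $\pi$, the weight computations for $\op{St}$ and $\mathbb{M}(\theta)$, and the final bookkeeping --- is routine once the classification of the simple objects in $\mathscr{X}({\bf G})$ is invoked.
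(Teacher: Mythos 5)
Your reductions and the construction of the invariant form are correct as far as they go: $\omega_{0,0}=\eta\otimes\eta$ is indeed ${\bf T}$- and $s$-fixed and generates $V_+$, so Frobenius reciprocity does give $\op{Hom}_{\Bbbk{\bf G}}(V_+,L)\hookrightarrow\{v\in L_{\op{tr}}\mid sv=v\}$; the weight computations $\mathbb{X}_{\bf T}(\mathbb{M}(\theta))=\{\theta,\theta^s\}$ and $\op{St}_{\op{tr}}=\Bbbk\eta$ with $s\eta=-\eta$ are right; and the form $B(\varepsilon(x)\eta,\varepsilon(y)\eta)=1+\delta_{x,y}$ really is ${\bf G}$-invariant (I checked the four $s$-cases coming from (\ref{sueta})). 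This is a genuinely different and much shorter route than the paper's, which instead exhibits an explicit codimension-one submodule $V^0_+$ and shows by a limiting argument over $\mathbb{F}_{q^n}$ that every $\xi\notin V^0_+$ generates all of $V_+$.

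The gap is that you have only excluded simple quotients of $V_+$ that lie in $\mathscr{X}({\bf G})$, whereas Proposition \ref{Vplus} asserts that $\Bbbk_{\op{tr}}$ is the unique simple quotient of $V_+$ among \emph{all} $\Bbbk{\bf G}$-modules --- and the paper's proof delivers exactly that, by showing $V^0_+$ is the unique maximal submodule. Your argument invokes the classification of simples in $\mathscr{X}({\bf G})$ and then tests $L=\op{St}$, $\Bbbk_{\op{tr}}$, $\mathbb{M}(\theta)$; it says nothing about a simple quotient $L\notin\mathscr{X}({\bf G})$, for which your embedding only shows that $L$ would need a nonzero $N_{\bf G}({\bf T})$-fixed vector, not that it cannot exist. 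This distinction is not pedantic in this paper: Remark \ref{remark} and Question \ref{question} hinge precisely on the fact that $V_-$ \emph{does} have simple quotients, all of them outside $\mathscr{X}({\bf G})$, so the same possibility must be ruled out for $V_+$ and your method does not do so. That said, the weaker statement you prove (together with the multiplicity-one count $\op{Hom}_{\Bbbk{\bf G}}(V_+,\Bbbk_{\op{tr}})\cong\Bbbk$) is all that is used downstream in Theorem \ref{TheoremforSL2} and in the computation of $\mathcal{Q}(\op{St}\otimes\op{St})$, so your approach would suffice there if the proposition were restated as ``$\Bbbk_{\op{tr}}$ is the unique simple quotient of $V_+$ lying in $\mathscr{X}({\bf G})$''; to prove the proposition as written you would still need something like the paper's identification of the unique maximal submodule.
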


\begin{proof} Let $V^0_{+}$ be the space spanned  by  the following set
 $$\{ \omega_{x,x}- \omega_{0,0}\mid x\in \bar{\mathbb{F}}^*_q\} \cup \{2 \omega_{x,y}- \omega_{0,0}\mid x \ne y\in \bar{\mathbb{F}}_q \}.$$   By Lemma \ref{omega},  it is easy to see that  $V^0_{+}$  is a $\Bbbk {\bf B}$-module.
  On the other hand, also by Lemma \ref{omega}, we have
  $$ s (\omega_{x,x}- \omega_{0,0})= (\omega_{-x^{-1},-x^{-1}}-\omega_{0,0})-(2 \omega_{-x^{-1},0}- \omega_{0,0} ) $$
$$s(2 \omega_{x,0}- \omega_{0,0}) =-(2 \omega_{-x^{-1},0}- \omega_{0,0})$$
 for any $x\in  \bar{\mathbb{F}}^*_q$ and
  $$s(2 \omega_{x,y}- \omega_{0,0})= (2 \omega_{-x^{-1},-y^{-1}}- \omega_{0,0})-(2 \omega_{-x^{-1},0}- \omega_{0,0})-(2 \omega_{-y^{-1},0}- \omega_{0,0})$$
 for any $x, y\in  \bar{\mathbb{F}}^*_q$ and $x\ne y$.
 Thus $V^0_{+}$  is a  $\Bbbk {\bf G}$-submodule of $V_{+}$.  It is easy to see that $V_{+}/V^0_{+} \cong \Bbbk_{\op{tr}}$.   In the following, we show that $V^0_{+}$ is the unique maximal submodule of  $V_{+}$.

  Let $\xi \in V_{+}\setminus V^0_{+}$. We show that $\Bbbk {\bf G} \xi=  V_{+}$. We write
  \begin{align} \label{eq4.0}
\xi=\sum_{z \in \bar{\mathbb{F}}_q} a_z \omega_{z,z}+  \sum_{x\ne y\in \bar{\mathbb{F}}_q}2 b_{x,y}\omega_{x,y} \in V_{+}.
\end{align}
Noting  that $\xi\notin V^0_{+}$,  thus we have $\displaystyle \sum_{z \in \bar{\mathbb{F}}_q} a_z +  \sum_{x\ne y\in \bar{\mathbb{F}}_q}b_{x,y} \ne 0$.
There exists an integer $n\in \mathbb{N}$, such that $\xi \in  \op{St}_{q^n} \otimes \op{St}_{q^n}$, where $\op{St}_{q^n} =\Bbbk U_{q^n} \eta$. Choose $\mathfrak{D}_{q^n}\subset {\bf T}$ such that $\alpha:\mathfrak{D}_{q^n}\rightarrow  \mathbb{F}^*_{q^n}$ is a bijection.  For $z\in  \mathbb{F}_{q^n}$, using Lemma \ref{omega},  we have
\begin{align} \label{eq4.1}
 \underline{\mathfrak{D}_{q^n}}\ \underline{U_{q^n}}\omega_{z,z} = (q^n-1)  \underline{U_{q^n}}\omega_{0,0}
\end{align}
For $x, y \in \mathbb{F}_{q^n}$ and $x\ne y$,  by Lemma \ref{omega}, we get
\begin{align} \label{eq4.2}
\underline{\mathfrak{D}_{q^n}}\ \underline{U_{q^n}}\omega_{x,y} =  (\underline{U_{q^n}} \eta)\otimes (\underline{U_{q^n}}\eta) - \underline{U_{q^n}}\omega_{0,0}.
\end{align}
Denote $a= \displaystyle  \sum_{z \in \bar{\mathbb{F}}_q} a_z$ and $b=\displaystyle   \sum_{x\ne y\in \bar{\mathbb{F}}_q}b_{x,y}  $.  For convenience, we set
$$\Lambda_n= \underline{U_{q^n}}\omega_{0,0}, \quad \Theta_n=(\underline{U_{q^n}} \eta)\otimes (\underline{U_{q^n}}\eta).$$
Combining the equations  (\ref{eq4.1}) and (\ref{eq4.2}) and the expression  (\ref{eq4.0}) of $\xi$,  it is easy to  get
\begin{align} \label{eq4.3}
\underline{\mathfrak{D}_{q^n}}\ \underline{U_{q^n}}  \xi = (a(q^n-1)-2b) \Lambda_n +2b \Theta_n.
\end{align}
By Lemma \ref{omega} and some easy computation, we see that
\begin{align} \label{eq4.4}
s\Lambda_n =\Lambda_n + (q^n-1)\omega_{0,0}- 2\sum_{z \in \mathbb{F}^*_{q^n}} \omega_{z,0},
\end{align}
and
\begin{align} \label{eq4.5}
s \Theta_n = \Theta_n-  2(q^n+1) \sum_{z \in \mathbb{F}^*_{q^n}} \omega_{z,0}+ (q^{2n}-1)\omega_{0,0}.
\end{align}
Noting that $\underline{\mathfrak{D}_{q^n}}\ \underline{U_{q^n}}\omega_{x,y} =\displaystyle  \underline{U_{q^n}} \sum_{z \in \mathbb{F}^*_{q^n}} \omega_{z,0}$,  using (\ref{eq4.2}), we get
\begin{align} \label{eq4.6}
\underline{U_{q^n}} \sum_{z \in \mathbb{F}^*_{q^n}} \omega_{z,0}=\Theta_n-  \Lambda_n.
\end{align}
Combining (\ref{eq4.4}), (\ref{eq4.5}) and (\ref{eq4.6}),  we get
\begin{equation}\label{eq4.7}
\begin{aligned}
& \underline{U_{q^n}} s\Lambda_n   = (2q^n+1) \Lambda_n-2 \Theta_n, \\
& \underline{U_{q^n}} s \Theta_n    =  (q^n+1)^2 \Lambda_n -(q^n+2) \Theta_n.
\end{aligned}
\end{equation}
Let $\underline{U_{q^n}} s$ act on both sides of  (\ref{eq4.3}) and using (\ref{eq4.7}), we have
\begin{align} \label{eq4.8}
\underline{U_{q^n}} s\underline{\mathfrak{D}_{q^n}}\ \underline{U_{q^n}}  \xi = (a(q^n-1)(2q^n+1)+2bq^{2n})\Lambda_n -(2a(q^n-1)+2bq^n)  \Theta_n.
\end{align}

Note that  the elements $\underline{\mathfrak{D}_{q^n}}\ \underline{U_{q^n}}  \xi$ and $\underline{U_{q^n}} s\underline{\mathfrak{D}_{q^n}}\ \underline{U_{q^n}}  \xi$ are linear combinations of $\Lambda_n$ and $ \Theta_n$ by (\ref{eq4.3}) and  (\ref{eq4.8}). It is not difficult to check that the
 determinant of the matrix
$$\begin{pmatrix}a(q^n-1)-2b&2b \\a(q^n-1)(2q^n+1)+2bq^{2n} & -(2a(q^n-1)+2bq^n)    \end{pmatrix}$$
is $-2(a+b)(q^n-1)((a+2b)q^n-a)$. Since $a+b\ne 0$, the matrix is invertible for sufficiently large integer $n$ and thus $\Lambda_n,  \Theta_n$ are also linear combinations of $\underline{\mathfrak{D}_{q^n}}\ \underline{U_{q^n}}  \xi$ and $\underline{U_{q^n}} s\underline{\mathfrak{D}_{q^n}}\ \underline{U_{q^n}}  \xi$.
In particular, we see that $\Lambda_n,  \Theta_n\in \Bbbk {\bf G} \xi$.
Using  $ \Lambda_n= \underline{U_{q^n}}  \omega_{0,0}\in \Bbbk {\bf G} \xi$,  we get $ \omega_{0,0}  \in \Bbbk {\bf G} \xi$ by \cite[Lemma 3.6]{CD3}. Noting that  $V_+= \Bbbk {\bf G}  \omega_{0,0}$  by Proposition \ref{indecomposable}  and thus   $\Bbbk {\bf G} \xi=  V_{+}$.
So $V^0_{+}$ is the unique maximal submodule of  $V_{+}$ and the proposition is proved.
\end{proof}

\begin{proposition}\label{Vminus}
One has that $\op{Hom}_{\Bbbk \bf G}(V_-,  M)=0$ for any simple object $M\in \mathscr{X}({\bf G})$.
\end{proposition}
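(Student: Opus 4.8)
Recall from \cite{D2} that the simple objects of $\mathscr{X}({\bf G})$ are $\op{St}$, $\Bbbk_{\op{tr}}$, and the modules $\mathbb{M}(\theta)$ with $\theta\in\widehat{\bf T}$ nontrivial, so it is enough to treat these three targets. By Proposition \ref{indecomposable}, $V_-=\Bbbk{\bf B}\rho_{1,0}$ (note $\rho_{0,1}=-\rho_{1,0}$); hence a $\Bbbk{\bf G}$-homomorphism $\varphi\colon V_-\to M$ is determined by the single element $m:=\varphi(\rho_{1,0})$, and if $\varphi\neq 0$ then $m\neq 0$ while $\Bbbk{\bf B}m=\varphi(V_-)$ is a nonzero submodule of the simple module $M$, so $\Bbbk{\bf B}m=M$. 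Lemma \ref{rho} gives the relations $\rho_{1,0}+h(\sqrt{-1})\varepsilon(-1)\rho_{1,0}=0$ and $\rho_{1,0}+h(\sqrt{-1})s\rho_{1,0}=0$ in $V_-$, so applying $\varphi$ we see that $m$ satisfies
\begin{align}\label{planrel}
h(\sqrt{-1})\varepsilon(-1)m=-m=h(\sqrt{-1})\,s\,m,\qquad\text{and hence}\qquad s\,m=\varepsilon(-1)m .
\end{align}
The plan is, for each of the three targets $M$, to expand $m$ in the natural ${\bf U}$-basis of $M|_{\bf B}$ and to use \eqref{planrel} to force $m$ to lie in a \emph{proper} ${\bf B}$-submodule of $M$ (or to vanish), contradicting $\Bbbk{\bf B}m=M$.

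For $M=\Bbbk_{\op{tr}}$ the group ${\bf B}$ acts trivially, so \eqref{planrel} gives $m=-m$ and hence $m=0$. For $M=\op{St}=\Bbbk{\bf U}\eta$ I would write $m=\sum_x a_x\varepsilon(x)\eta$; since $h(\sqrt{-1})\varepsilon(y)\eta=\varepsilon(-y)\eta$, the first identity of \eqref{planrel} reads $\sum_x a_x\varepsilon(1-x)\eta=-\sum_x a_x\varepsilon(x)\eta$, i.e. $a_{1-x}=-a_x$ for all $x$, which forces $\sum_x a_x=0$. But the set $\{\sum_x b_x\varepsilon(x)\eta\mid\sum_x b_x=0\}$ is a ${\bf B}$-submodule of $\op{St}$ of codimension one, because ${\bf U}$ and ${\bf T}$ permute the basis $\{\varepsilon(x)\eta\}$ and preserve the coefficient sum; thus $\Bbbk{\bf B}m\subsetneq\op{St}$, a contradiction.

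Now let $M=\mathbb{M}(\theta)$ with $\theta$ nontrivial. Here $\mathbb{M}(\theta)|_{\bf B}=\Bbbk{\bf 1}_\theta\oplus V_s$, where $V_s=\bigoplus_{x\in\bar{\mathbb{F}}_q}\Bbbk\,\varepsilon(x)s{\bf 1}_\theta$, with $h(t){\bf 1}_\theta=\theta(t){\bf 1}_\theta$ and $h(t)\varepsilon(x)s{\bf 1}_\theta=\theta(t)^{-1}\varepsilon(t^2x)s{\bf 1}_\theta$. Writing $m=c\,{\bf 1}_\theta+\sum_x a_x\varepsilon(x)s{\bf 1}_\theta$, a short computation shows that the first identity of \eqref{planrel} is equivalent to $c\,(\theta(\sqrt{-1})+1)=0$ together with $a_{1-x}=-\theta(\sqrt{-1})\,a_x$, and iterating the latter gives $a_x\,(1-\theta(-1))=0$. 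If $\theta(-1)\neq1$, then $\theta(\sqrt{-1})^2=\theta(-1)=-1$ forces $\theta(\sqrt{-1})\neq-1$, so $a_x=0$ for all $x$ and $c=0$, i.e. $m=0$. If $\theta(-1)=1$ and $\theta(\sqrt{-1})=1$, then $c=0$ and $a_{1-x}=-a_x$, so $\sum_x a_x=0$ and $m$ lies in the proper ${\bf B}$-submodule $\{\sum_x b_x\varepsilon(x)s{\bf 1}_\theta\mid\sum_x b_x=0\}$ of $\mathbb{M}(\theta)$, again a contradiction.

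The remaining sub-case $\theta(-1)=1$, $\theta(\sqrt{-1})=-1$ is the one I expect to be the main obstacle, since \eqref{planrel} now only yields $a_{1-x}=a_x$ and $s\,m=\varepsilon(-1)m$, which do not suffice. To deal with it I would use the stronger relations in $\op{Ann}_{\Bbbk{\bf G}}(\rho_{1,0})$ coming from \eqref{Eq4}: applying $\varphi$, for every $n$ and every $u\in\mathbb{F}^*_{q^n}$ the element $m$ satisfies
\begin{align}\label{plandyn}
s\,\varepsilon(-u)\,\underline{\mathfrak{D}_{q^n}}\,m=(q^n+1)\,h(u^{-\frac{1}{2}})\,m+(\varepsilon(u^{-1})-1)\,\underline{\mathfrak{D}_{q^n}}\,m,
\end{align}
which is exactly the identity \eqref{Eq5} with $\xi$ replaced by $m$. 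Expanding both sides in the ${\bf U}$-basis of $\mathbb{M}(\theta)$ and running the support-cardinality argument from the proof of Proposition \ref{indecomposable} — the operator $\underline{\mathfrak{D}_{q^n}}$ spreads the finite support of $m$ over full $\mathbb{F}^*_{q^n}$-cosets, so the number of basis vectors $\varepsilon(y)s{\bf 1}_\theta$ with $y\neq0$ occurring on one side of \eqref{plandyn} grows linearly in $q^n$ while on the other it stays bounded in terms of the fixed support of $m$, and these cannot agree once $n$ runs through large multiples of an $N$ with $\op{supp}(a)\subseteq\mathbb{F}_{q^N}$ — one concludes $a_x=0$ for all $x\neq0$. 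Then $m=c\,{\bf 1}_\theta+a_0\,s{\bf 1}_\theta$ is a combination of the two ${\bf T}$-weight vectors of $\mathbb{M}(\theta)$, and one last use of \eqref{planrel} finishes: comparing the coefficient of $\varepsilon(1)s{\bf 1}_\theta$ in the first identity gives $a_0=0$, whereupon $s\,m=\varepsilon(-1)m$ reads $c\,s{\bf 1}_\theta=c\,{\bf 1}_\theta$ and forces $c=0$. Hence $m=0$ in every case, so $\op{Hom}_{\Bbbk{\bf G}}(V_-,M)=0$; the delicate point is the support count in this final sub-case.
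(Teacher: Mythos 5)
Your setup (reduction to the three simple objects, $V_-=\Bbbk{\bf B}\rho_{1,0}$ forcing $\Bbbk{\bf B}\varphi(\rho_{1,0})=M$, and the two relations $h(\sqrt{-1})\varepsilon(-1)\rho_{1,0}=-\rho_{1,0}=h(\sqrt{-1})s\rho_{1,0}$) is exactly the paper's, and your treatment of $\Bbbk_{\op{tr}}$, of $\op{St}$ (via the ${\bf B}$-invariant coefficient-sum functional, a cleaner variant of the paper's $\underline{\mathfrak{D}_{q^n}}\,\underline{U_{q^n}}$-annihilation argument), and of $\mathbb{M}(\theta)$ when $\theta(-1)\neq 1$ or $\theta(\sqrt{-1})=1$ is correct. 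The gap is where you predicted: in the sub-case $\theta(-1)=1$, $\theta(\sqrt{-1})=-1$, the support-cardinality argument of Proposition \ref{indecomposable} does not transfer to $\mathbb{M}(\theta)$ and cannot yield $a_x=0$ for $x\neq 0$. Two things fail. First, on $\mathbb{M}(\theta)$ the operator $\underline{\mathfrak{D}_{q^n}}$ collapses $\sum_{x\neq 0}a_x\varepsilon(x)s{\bf 1}_\theta$ to a scalar multiple of the single vector $\sum_{z\in\mathbb{F}^*_{q^n}}\theta(z^{-1/2})\varepsilon(z)s{\bf 1}_\theta$, the scalar being $\sum_x a_x\theta(x^{1/2})$ (cf.\ (\ref{4.15})--(\ref{4.16})); all information about the individual $a_x$ is destroyed before any counting begins. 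Second, the claimed asymmetry of supports is false: the term $(\varepsilon(u^{-1})-1)\underline{\mathfrak{D}_{q^n}}m$ on the right-hand side also has $\varepsilon(y)s{\bf 1}_\theta$-support of size roughly $q^n$, so both sides grow linearly in $q^n$ and no contradiction comes out of counting. The count in Proposition \ref{indecomposable} works only because $\{\rho_{x,y}\}$ is a two-parameter family and one counts the vectors on a fixed ``slope'' $\rho_{uz_1,u}$; the one-parameter family $\varepsilon(y)s{\bf 1}_\theta$ has no analogue of this.

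The repair is to extract a single coefficient rather than count supports, and the right intermediate goal is not $a_x=0$ for $x\neq 0$ but only $c=a_0=0$. In your identity $s\varepsilon(-u)\underline{\mathfrak{D}_{q^n}}m=(q^n+1)h(u^{-1/2})m+(\varepsilon(u^{-1})-1)\underline{\mathfrak{D}_{q^n}}m$, compare the coefficients of ${\bf 1}_\theta$: since $s\varepsilon(-u)\varepsilon(z)s{\bf 1}_\theta$ lands on ${\bf 1}_\theta$ only when $z=u$, and $(\varepsilon(u^{-1})-1)$ annihilates the ${\bf 1}_\theta$-component, the left-hand side contributes the coefficient of $\varepsilon(u)s{\bf 1}_\theta$ in $\underline{\mathfrak{D}_{q^n}}m$, namely $\theta(u^{-1/2})\sum_x a_x\theta(x^{1/2})$, which is independent of $n$ once $\op{supp}(m)\subset\mathbb{F}_{q^n}$, while the right-hand side contributes $(q^n+1)\theta(u^{-1/2})c$. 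Letting $n$ run over multiples of a fixed $N$ forces $c=0$; your relation $sm=\varepsilon(-1)m$ then gives $a_0=c\,\theta(-1)^{-1}=0$, so $m\in\Bbbk{\bf B}s{\bf 1}_\theta$, a proper ${\bf B}$-submodule of $\mathbb{M}(\theta)$, contradicting $\Bbbk{\bf B}m=\mathbb{M}(\theta)$. This coefficient extraction is precisely what the paper does in (\ref{4.13})--(\ref{4.16}); with it your argument closes, and without it the final sub-case is not proved.
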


\begin{proof}
For ${\bf G}=SL_2(\bar{\mathbb{F}}_q)$,  the simple  objects in $\mathscr{X}({\bf G})$ are $\op{St}, \Bbbk_{\op{tr}}$ and $\mathbb{M}(\theta)$, where  $\theta \in \widehat{\bf T}$  is nontrivial.
Firstly we show that $\op{Hom}_{\Bbbk \bf G}(V_-,  M)=0$ when $M= \Bbbk_{\op{tr}}$ or $\op{St}$.
For any $n\in \mathbb{N}$, choose $\mathfrak{D}_{q^n}\subset {\bf T}$ such that $\alpha:\mathfrak{D}_{q^n}\rightarrow  \mathbb{F}^*_{q^n}$ is a bijection. We claim that $\underline{\mathfrak{D}_{q^n}}\ \underline{U_{q^n}} \rho_{x,y}=0$ for any $x,y\in \mathbb{F}_{q^n}$.
 Note that  $\rho_{x,y}=-\rho_{y,x}$ for any $x,y\in \mathbb{F}_{q^n}$ and thus  we get
$$\underline{U_{q^n}} \rho_{-z,0}=-\underline{U_{q^n}} \rho_{0,-z}= - \underline{U_{q^n}} \rho_{z,0}$$
for any $z \in \mathbb{F}^*_{q^n}$. Therefore  we have
$$\underline{\mathfrak{D}_{q^n}}\ \underline{U_{q^n}} \rho_{x,y} =\sum_{z\in  \mathbb{F}^*_{q^n}} \underline{U_{q^n}} \rho_{z,0}=0$$
for any $x,y\in \mathbb{F}_{q^n}$.  The claim is proved. We just show
$\op{Hom}_{\Bbbk \bf G}(V_-,  \op{St})=0$ and the same argument applies to the case $M=  \Bbbk_{\op{tr}}$.
Suppose for a contradiction  that $ \op{Hom}_{\Bbbk \bf G}(V_-,  \op{St})\ne 0$ and let  $\varphi \in \op{Hom}_{\Bbbk \bf G}(V_-,  \op{St})$ be a nonzero homomorphism. Then there exists $
v\in V_-$ such that  $\varphi (v)=\eta$, where $\eta=(1-s) {\bf 1}_{\op{tr}} \in  \op{St}$.  There exists an integer $n\in \mathbb{N}$, such that $v \in  \op{St}_{q^n} \otimes \op{St}_{q^n}$, where $\op{St}_{q^n} =\Bbbk U_{q^n} \eta$.
Using the previous claim, we see that  $\underline{\mathfrak{D}_{q^n}}\ \underline{U_{q^n}} v=0$. However $\underline{\mathfrak{D}_{q^n}}\ \underline{U_{q^n}} \eta=(q^n-1) \underline{U_{q^n}} \eta$, which is nonzero. We get a contradiction and thus $\op{Hom}_{\Bbbk  \bf G}(V_-,  \op{St})=0$.

In the following,  we  show that $\op{Hom}_{\Bbbk \bf G}(V_-,  \mathbb{M}(\theta))=0$, where $\theta$ is non-trivial.
Suppose that  $\op{Hom}_{\Bbbk \bf G}(V_-,  \mathbb{M}(\theta))\ne 0$ and let $\Gamma\in \op{Hom}_{\Bbbk \bf G}(V_-,  \mathbb{M}(\theta))$ be a nonzero homomorphism.We have $V_-= \Bbbk {\bf B} \rho_{1, 0}$ by  Proposition \ref{indecomposable}. Now we write
\begin{align}\label{4.9}
\xi=\Gamma(\rho_{1,0})= a{\bf 1}_{\theta} +b s {\bf 1}_{\theta}  +\sum_{x\in \mathbb{F}^*_{q^n}} f_x \varepsilon(x) s{\bf 1}_{\theta}.
\end{align}
for some integer $n\in \mathbb{N}$. Since $h(-1)$ is in the center of $\bf G$  and $h(-1)\rho_{1,0}= \rho_{1,0}$,  we  see that $\theta(-1)=1$.

Let $z_0\in  \overline{\mathbb{F}}_{q}$ such that $z^2_0=-1$.  Firstly we assume that $\theta(z_0)=1$.
Using (\ref{sus=xsty}), we get $s \varepsilon(x) s{\bf 1}_{\theta}=\theta(-x) \varepsilon(-x^{-1}) s {\bf 1}_{\theta}$.
Then we have
\begin{align}\label{4.10}
\Gamma(h(z_0)s\rho_{1,0})=-\Gamma(\rho_{1,0}) = b {\bf 1}_{\theta} +a s {\bf 1}_{\theta}+ \sum_{x\in \overline{\mathbb{F}}^*_{q}} f_x \theta(x) \varepsilon(x^{-1}) s{\bf 1}_{\theta},
\end{align}
which implies that $a=-b$ and $f_{x^{-1}}=-\theta(x) f_x$ when considering  (\ref{4.9}).  In particular, we have $f_1=0$.  On the other hand, we have
\begin{align}\label{4.11}
\Gamma(s \varepsilon(-1)\rho_{1,0})= \Gamma(\rho_{1,0})= as {\bf 1}_{\theta}+b \varepsilon(1) s{\bf 1}_{\theta}+ \sum_{x\in \overline{\mathbb{F}}^*_{q}} f_x \theta(x-1) \varepsilon(-(x-1)^{-1}) s{\bf 1}_{\theta}.
\end{align}
Combining (\ref{4.10}) and (\ref{4.11}), we see that $a=b=0$. So (\ref{4.9}) implies that $\Gamma(\rho_{1,0})\in \Bbbk {\bf B} s{\bf 1}_{\theta} $.
Note that $V_-= \Bbbk {\bf B} \rho_{1, 0}$ by  Proposition \ref{indecomposable}.  Then we get a contradiction since $\Bbbk {\bf B} s{\bf 1}_{\theta} \ne \mathbb{M}(\theta) $.

Now we consider the case that $\theta(z_0)=-1$. Then we get
\begin{align}\label{4.12}
\Gamma(h(z_0)s\rho_{1,0})=-\Gamma(\rho_{1,0}) = -b {\bf 1}_{\theta} -a s {\bf 1}_{\theta}- \sum_{x\in \overline{\mathbb{F}}^*_{q}} f_x \theta(x) \varepsilon(x^{-1}) s{\bf 1}_{\theta},
\end{align}
Combining  (\ref{4.9}) and (\ref{4.12}), we see  that $a=b$ and $f_{x^{-1}}=\theta(x) f_x$.
For any integer $n\in \mathbb{N}$,  denote  $\displaystyle \zeta_n= \sum_{x\in  \mathbb{F}^*_{q^n}} \rho_{x,0}= \underline{\mathfrak{D}_{q^n}} \rho_{1,0}$. Then we have
\begin{align}\label{4.13}
s \varepsilon(-u) \zeta_n=(q^n+1) \rho_{u^{-1},0}+ (\varepsilon(u^{-1})-1)\zeta_n
\end{align}
for any $u\in  \mathbb{F}^*_{q^n}$ by some direct computation. Noting that $\rho_{u^{-1},0}=h(u^{-\frac{1}{2}}) \rho_{1,0}$, thus
 the element $\xi= \Gamma(\rho_{1,0})$ also satisfies that
 \begin{align}\label{4.14}
s \varepsilon(-u)  \underline{\mathfrak{D}_{q^n}} \xi =(q^n+1) h(u^{-\frac{1}{2}}) \xi+ (\varepsilon(u^{-1})-1) \underline{\mathfrak{D}_{q^n}} \xi
\end{align}
for any $u\in  \mathbb{F}^*_{q^n}$ by (\ref{4.13}). Note that
 \begin{align}\label{4.15}
\underline{\mathfrak{D}_{q^n}} \xi =\sum_{t\in  \mathfrak{D}_{q^n}} \sum_{x\in \mathbb{F}^*_{q^n}} h(t) f_x \varepsilon(x) s{\bf 1}_{\theta}= \sum_{t,x\in \mathbb{F}^*_{q^n}}  f_x \theta(t^{-\frac{1}{2}}) \varepsilon(tx) s{\bf 1}_{\theta}.
\end{align}
Thus if we write $ \underline{\mathfrak{D}_{q^n}} \xi=\displaystyle  \sum_{z\in \mathbb{F}^*_{q^n}} g_z \varepsilon(z) s{\bf 1}_{\theta}$.
Then we see that
\begin{align}\label{4.16}
 g_z= \theta(z^{-\frac{1}{2}}) \sum_{x \in \mathbb{F}^*_{q^n}} f_x \theta(x^{\frac{1}{2}}).
 \end{align}
For any fixed $u\in  \mathbb{F}^*_{q^n}$, we compute the coefficients of ${\bf 1}_{\theta}$ on the both sides of (\ref{4.14}) and then we get
$g_u=\theta(u^{-\frac{1}{2}})a(q^n+1)$. Using (\ref{4.16}), we get
$ \displaystyle \sum_{x\in \mathbb{F}^*_{q^n}} f_x \theta(x^{\frac{1}{2}})=a(q^n+1) $.   However, if we replace the integer $n$ by any another integer $m>n$ and $n|m$, we will also have $$ \sum_{x} f_x \theta(x^{\frac{1}{2}})=a(q^m+1)=a(q^n+1).$$
Thus $a=b=0$ and then $\xi=\Gamma(\rho_{1,0})\in \Bbbk {\bf B} s{\bf 1}_{\theta} $, which is a contradiction by the same discussion as before.
The proposition is proved.
\end{proof}

\begin{remark} \normalfont \label{remark}
Noting that $V_-$ is generated by $\rho_{1,0}$, so the $\Bbbk {\bf G}$-module $V_-$ has simple quotients. However, any simple quotient of $V_-$ is not in $\mathscr{X}({\bf G})$
by Proposition \ref{Vminus}. It also shows that for any two objects $M, N\in \mathscr{X}({\bf G})$, the simple quotients of $M\otimes N$ do not generally lie in $\mathscr{X}({\bf G})$.
\end{remark}

\begin{question}\normalfont \label{question}
By Proposition \ref{indecomposable}, $V_-$ is indecomposable. The simple quotients of  $V_-$ yield some new infinite-dimensional simple modules of $SL_2(\bar{\mathbb{F}}_q)$  that have not been previously observed. Is $V_-$ itself simple?
\end{question}

Using Proposition \ref{Vplus} and Proposition  \ref{Vminus}, we see that Conjecture \ref{Homfinite} and Conjecture  \ref{Homzero} hold for ${\bf G}=SL_2(\bar{\mathbb{F}}_q)$.
\begin{theorem}\label{TheoremforSL2}
Assume that ${\bf G}=SL_2(\bar{\mathbb{F}}_q)$
and let $M,N \in  \mathscr{X}({\bf G})$. One has that
$$\dim_{\Bbbk} \op{Hom}_{\Bbbk \bf G}(M\otimes N,  L)< \infty$$ for any simple object $L\in \mathscr{X}({\bf G})$. Moreover, if   $\mathbb{X}_{\bf T}(L) \cap \mathbb{X}_{\bf T}(M\otimes N)= \emptyset$, then $ \op{Hom}_{\Bbbk \bf G}(M\otimes N,  L)=0$.
\end{theorem}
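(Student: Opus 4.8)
The plan is to reduce the general case to the single computation $\op{St}\otimes\op{St}$, exactly as the general-$\bf G$ reductions in Section 3 suggest, and then invoke the explicit decomposition $\op{St}\otimes\op{St}=V_+\oplus V_-$ together with Propositions \ref{Vplus} and \ref{Vminus}. First I would recall from \cite{D2} that for ${\bf G}=SL_2(\bar{\mathbb{F}}_q)$ every simple object of $\mathscr{X}({\bf G})$ is one of $\op{St}$, $\Bbbk_{\op{tr}}$, or $\mathbb{M}(\theta)$ with $\theta$ nontrivial, and that every object $M\in\mathscr{X}({\bf G})$ has a finite composition series whose factors are among these. Since $-\otimes N$ and $\op{Hom}_{\Bbbk\bf G}(-,L)$ behave well under short exact sequences (the Hom functor is left exact, and tensoring over $\Bbbk$ is exact), it suffices to bound $\dim\op{Hom}_{\Bbbk\bf G}(S\otimes S',L)$ for $S,S',L$ simple, and to show this Hom vanishes when the $\bf T$-weight supports are disjoint. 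Here I would note that disjointness of $\mathbb{X}_{\bf T}(M\otimes N)$ and $\mathbb{X}_{\bf T}(L)$ forces disjointness for each pair of composition factors, because $\mathbb{X}_{\bf T}(M\otimes N)=\{\,\chi\psi\mid \chi\in\mathbb{X}_{\bf T}(M),\ \psi\in\mathbb{X}_{\bf T}(N)\,\}$ contains the weight support of each $S\otimes S'$.

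Next I would dispose of the cases where at least one of $S,S'$ is $\Bbbk_{\op{tr}}$ or $\mathbb{M}(\theta)$. If $S=\Bbbk_{\op{tr}}$ then $S\otimes S'\cong S'$ and the statement is trivial (finite-dimensional Hom-spaces between simples, vanishing on disjoint supports being automatic since $\mathbb{X}_{\bf T}(S')\cap\mathbb{X}_{\bf T}(S')\neq\emptyset$). If $S=\mathbb{M}(\theta)$, then since $\op{St}$ is a quotient of... more directly, I would use Proposition \ref{tensorinducedmodule}: for any $\mu$, $\op{St}\otimes\mathbb{M}(\mu)\cong\op{Ind}^{\bf G}_{\bf T}\Bbbk_{\mu}$, and more generally $\mathbb{M}(\lambda)_J\otimes\mathbb{M}(\mu)$ is a finite direct sum of modules $\op{Ind}^{\bf G}_{{\bf B}_w}\Bbbk_{\lambda^w\mu}$; combined with Frobenius reciprocity and $\dim(E(\theta)_J)^{\bf T}\le|W|$, this already gives both conclusions for any tensor product in which one factor is an induced module $\mathbb{M}(\lambda)_J$ — and $\mathbb{M}(\theta)$, $\op{St}$ are both of this form (with $J=\emptyset$ and $\theta=\op{tr},J=I$ respectively). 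So the only genuinely new case is $S=S'=\op{St}$.

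For $S=S'=\op{St}$ I would invoke the direct sum decomposition $\op{St}\otimes\op{St}=V_+\oplus V_-$ established via Lemmas \ref{omega} and \ref{rho}. Then $\op{Hom}_{\Bbbk\bf G}(\op{St}\otimes\op{St},L)=\op{Hom}_{\Bbbk\bf G}(V_+,L)\oplus\op{Hom}_{\Bbbk\bf G}(V_-,L)$. By Proposition \ref{Vminus}, the $V_-$ summand is zero for every simple $L\in\mathscr{X}({\bf G})$, so it contributes nothing to either conclusion. By Proposition \ref{Vplus}, $V_+$ has a unique simple quotient $\Bbbk_{\op{tr}}$, so $\op{Hom}_{\Bbbk\bf G}(V_+,L)$ is one-dimensional if $L=\Bbbk_{\op{tr}}$ and zero otherwise; this is certainly finite-dimensional, and when it is nonzero we have $L=\Bbbk_{\op{tr}}$, whose weight support $\{\op{tr}\}$ lies in $\mathbb{X}_{\bf T}(\op{St})\cdot\mathbb{X}_{\bf T}(\op{St})\ni\op{tr}$, so the supports are never disjoint in that case. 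This settles the $\op{St}\otimes\op{St}$ case and, combined with the reductions above, proves the theorem.

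The main obstacle is not in this final assembly — which is essentially bookkeeping — but in the two propositions it rests on, particularly the computation of $\op{End}_{\Bbbk\bf G}(V_-)$ and the vanishing $\op{Hom}_{\Bbbk\bf G}(V_-,\mathbb{M}(\theta))=0$; those require the delicate finite-field counting arguments (replacing $q^n$ by $q^m$ with $n\mid m$ to force coefficients to vanish) already carried out in Propositions \ref{indecomposable} and \ref{Vminus}. Given those, the theorem follows cleanly. One point I would be careful about is making the reduction to composition factors fully rigorous: I would argue by induction on composition length, using that $\op{Hom}_{\Bbbk\bf G}(-,L)$ sends a short exact sequence $0\to M'\to M\to M''\to 0$ to a left-exact sequence, so $\dim\op{Hom}(M\otimes N,L)\le\dim\op{Hom}(M'\otimes N,L)+\dim\op{Hom}(M''\otimes N,L)$ and the middle term vanishes when both outer ones do, and that $\mathscr{X}({\bf G})$-objects have finite length by the highest-weight-category structure of \cite{D3}.
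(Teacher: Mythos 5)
Your proposal is correct and follows essentially the same route as the paper: reduce via Proposition \ref{tensorinducedmodule} (and Frobenius reciprocity) to the single remaining case $\op{St}\otimes\op{St}$, then use the decomposition $V_+\oplus V_-$ together with Propositions \ref{Vplus} and \ref{Vminus}. The only cosmetic difference is that you run the reduction through composition series and left-exactness of $\op{Hom}$, whereas the paper implicitly uses the fact (from \cite{D2}) that every object of $\mathscr{X}(SL_2(\bar{\mathbb{F}}_q))$ is a finite direct sum of the indecomposables $\op{St}$, $\Bbbk_{\op{tr}}$, $\mathbb{M}(\theta)$, which sidesteps the (minor) issue of controlling weight supports of subquotients.
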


For any $M, N\in \mathscr{X}({\bf G})$,
 we can define the maximal semisimple quotient of $M\otimes N$ in  $\mathscr{X}({\bf G})$ for ${\bf G}=SL_2(\bar{\mathbb{F}}_q)$ by Theorem \ref{TheoremforSL2}.
According to  the discussion in \cite[Section 5]{D2}, all the indecomposable  objects in $\mathscr{X}({\bf G})$ are $\op{St}, \Bbbk_{\op{tr}}$ and $\{\mathbb{M}(\theta)\mid \theta \in \widehat{\bf T} \}$ and each  object in $\mathscr{X}({\bf G})$ is a direct sum of these modules.
Using the following proposition,  we can explicitly determine $\mathcal{Q}(M\otimes N)$ for all  $M, N\in \mathscr{X}({\bf G})$.
\begin{proposition}
When ${\bf G}=SL_2(\bar{\mathbb{F}}_q)$,  one has that
  $$\mathcal{Q}(\op{St}\otimes \op{St})=\Bbbk_{\op{tr}} , \  \mathcal{Q}(\op{St}\otimes \mathbb{M}(\op{tr}))=\Bbbk_{\op{tr}} \oplus \op{St}, \  \mathcal{Q}(\mathbb{M}(\op{tr}) \otimes \mathbb{M}(\op{tr}))=\Bbbk_{\op{tr}} ^{\oplus 2} \oplus \op{St}$$
and $\mathcal{Q}(\op{St}\otimes \mathbb{M}(\theta))=\mathbb{M}(\theta)\oplus \mathbb{M}(\theta^s)$ for nontrivial $\theta \in \widehat{{\bf T}}$.
For any two nontrivial characters $\lambda, \mu \in \widehat{{\bf T}}$, if $\lambda \mu \ne \op{tr}$ and $\lambda^s \mu \ne \op{tr}$ (also $\lambda \mu^s \ne \op{tr}$), then
$$\mathcal{Q}(\mathbb{M}(\lambda)\otimes \mathbb{M}(\mu))=\mathbb{M}(\lambda \mu)\oplus\mathbb{M}(\lambda^s \mu) \oplus \mathbb{M}(\lambda \mu^s). $$
If $\lambda \mu =\op{tr}$, then we have
$$\mathcal{Q}(\mathbb{M}(\lambda)\otimes \mathbb{M}(\mu))=\Bbbk_{\op{tr}}  \oplus\mathbb{M}(\lambda^s \mu)
\oplus \mathbb{M}(\lambda \mu^s). $$
If $\lambda^s \mu= \op{tr}$ (also $\lambda \mu^s =\op{tr}$), then we have
$$\mathcal{Q}(\mathbb{M}(\lambda)\otimes \mathbb{M}(\mu))=\mathbb{M}(\lambda \mu)\oplus\Bbbk_{\op{tr}} \oplus \op{St}.$$
\end{proposition}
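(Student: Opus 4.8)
The plan is to reduce every identity in the statement to three basic computations --- the simple quotients of $\op{St}\otimes\op{St}$, of $\op{St}\otimes\mathbb{M}(\kappa)$ for an arbitrary $\kappa\in\widehat{\bf T}$, and of $\mathbb{M}(\nu)$ for an arbitrary $\nu\in\widehat{\bf T}$ --- and then to reassemble them, using that $\op{Hom}_{\Bbbk\bf G}(-,L)$ carries direct sums to direct sums (so $\mathcal{Q}$ is additive over direct sums) together with the description $\mathcal{Q}(M\otimes N)\cong\bigoplus_{\theta,J}E(\theta)^{\oplus r_{\theta,J}}_J$, available for $SL_2(\bar{\mathbb{F}}_q)$ by Theorem \ref{TheoremforSL2}. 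The reduction comes from Proposition \ref{tensorinducedmodule}: for ${\bf G}=SL_2(\bar{\mathbb{F}}_q)$ we have $W=\{1,s\}$ and $\mathbb{M}(\lambda)=\mathbb{M}(\lambda)_\emptyset$ for every $\lambda$, and since $W^\emptyset=W$, $w_\emptyset=1$, ${\bf U}'_{1}={\bf U}$ and ${\bf U}'_{s}=\{1\}$, Proposition \ref{tensorinducedmodule} gives
\begin{align*}
\mathbb{M}(\lambda)\otimes\mathbb{M}(\mu)\;\cong\;\op{Ind}^{\bf G}_{\bf B}\Bbbk_{\lambda\mu}\,\oplus\,\op{Ind}^{\bf G}_{\bf T}\Bbbk_{\lambda^s\mu}\;\cong\;\mathbb{M}(\lambda\mu)\,\oplus\,\bigl(\op{St}\otimes\mathbb{M}(\lambda^s\mu)\bigr),
\end{align*}
using $\op{Ind}^{\bf G}_{\bf B}\Bbbk_\nu=\mathbb{M}(\nu)$ and the last assertion of Proposition \ref{tensorinducedmodule}, namely $\op{St}\otimes\mathbb{M}(\kappa)\cong\op{Ind}^{\bf G}_{\bf T}\Bbbk_\kappa$. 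The same computation with $\lambda=\mu=\op{tr}$ gives $\mathbb{M}(\op{tr})\otimes\mathbb{M}(\op{tr})\cong\mathbb{M}(\op{tr})\oplus\bigl(\op{St}\otimes\mathbb{M}(\op{tr})\bigr)$. The simple objects of $\mathscr{X}({\bf G})$ that can occur are $\Bbbk_{\op{tr}}$, $\op{St}$, and $\mathbb{M}(\chi)$ with $\chi$ nontrivial.

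For $\op{St}\otimes\op{St}$ I will use the decomposition $\op{St}\otimes\op{St}=V_+\oplus V_-$ together with Propositions \ref{Vplus} and \ref{Vminus}: the only simple quotient of $V_+$ is $\Bbbk_{\op{tr}}$, occurring once, while $V_-$ has no simple quotient in $\mathscr{X}({\bf G})$, so $\mathcal{Q}(\op{St}\otimes\op{St})=\Bbbk_{\op{tr}}$. For $\mathbb{M}(\nu)$: if $\nu$ is nontrivial then $\mathbb{M}(\nu)=E(\nu)_\emptyset$ is simple, so its only simple quotient is $\mathbb{M}(\nu)$, with multiplicity one; if $\nu=\op{tr}$ then $\mathbb{M}(\op{tr})$ is indecomposable of length two with simple submodule $\op{St}$ and $\mathbb{M}(\op{tr})/\op{St}\cong\Bbbk_{\op{tr}}$, so its only simple quotient is $\Bbbk_{\op{tr}}$, with multiplicity one.

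For $\op{St}\otimes\mathbb{M}(\kappa)\cong\op{Ind}^{\bf G}_{\bf T}\Bbbk_\kappa$, Frobenius reciprocity gives $\op{Hom}_{\Bbbk\bf G}(\op{St}\otimes\mathbb{M}(\kappa),L)\cong\op{Hom}_{\Bbbk\bf T}(\Bbbk_\kappa,L)= L_\kappa$ for every simple $L$, so what is needed are the $\bf T$-weight spaces of the simple objects; these are read off from the Bruhat-type bases recalled in Section 2. One finds $\mathbb{X}_{\bf T}(\Bbbk_{\op{tr}})=\mathbb{X}_{\bf T}(\op{St})=\{\op{tr}\}$, each with one-dimensional weight space, and $\mathbb{X}_{\bf T}(\mathbb{M}(\chi))=\{\chi,\chi^s\}$ for nontrivial $\chi$, with $\dim\mathbb{M}(\chi)_\chi=\dim\mathbb{M}(\chi)_{\chi^s}=1$ when $\chi^2\ne\op{tr}$ and $\dim\mathbb{M}(\chi)_\chi=2$ when $\chi^2=\op{tr}$ (in which case $\chi^s=\chi$). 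Hence $\mathcal{Q}(\op{St}\otimes\mathbb{M}(\op{tr}))=\Bbbk_{\op{tr}}\oplus\op{St}$, while for nontrivial $\kappa$ one obtains $\mathcal{Q}(\op{St}\otimes\mathbb{M}(\kappa))=\mathbb{M}(\kappa)\oplus\mathbb{M}(\kappa^s)$ --- both in the generic case and, consistently, in the degenerate case $\kappa^2=\op{tr}$, where $\mathbb{M}(\kappa)=\mathbb{M}(\kappa^s)$.

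It remains to combine these. Additivity gives $\mathcal{Q}(\mathbb{M}(\op{tr})\otimes\mathbb{M}(\op{tr}))=\Bbbk_{\op{tr}}\oplus(\Bbbk_{\op{tr}}\oplus\op{St})=\Bbbk_{\op{tr}}^{\oplus2}\oplus\op{St}$, and for nontrivial $\lambda,\mu$,
\begin{align*}
\mathcal{Q}(\mathbb{M}(\lambda)\otimes\mathbb{M}(\mu))\;\cong\;\mathcal{Q}(\mathbb{M}(\lambda\mu))\,\oplus\,\mathcal{Q}\bigl(\op{St}\otimes\mathbb{M}(\lambda^s\mu)\bigr),
\end{align*}
where $\mathcal{Q}(\mathbb{M}(\lambda\mu))$ is taken from the second paragraph. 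If $\lambda\mu\ne\op{tr}$ and $\lambda^s\mu\ne\op{tr}$ (equivalently $\lambda\mu^s\ne\op{tr}$) this equals $\mathbb{M}(\lambda\mu)\oplus\mathbb{M}(\lambda^s\mu)\oplus\mathbb{M}((\lambda^s\mu)^s)=\mathbb{M}(\lambda\mu)\oplus\mathbb{M}(\lambda^s\mu)\oplus\mathbb{M}(\lambda\mu^s)$; if $\lambda\mu=\op{tr}$ the first summand degenerates to $\Bbbk_{\op{tr}}$; and if $\lambda^s\mu=\op{tr}$ the second degenerates to $\Bbbk_{\op{tr}}\oplus\op{St}$; in each case one recovers the asserted list. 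I expect the main obstacle to be not any single deep step but the bookkeeping in the last two paragraphs: making the $SL_2$ specialization of Proposition \ref{tensorinducedmodule} precise, computing the weight multiplicities of the simple modules (keeping the degenerate case $\chi^2=\op{tr}$, where $\chi=\chi^s$, consistent with the generic formulas), and tracking exactly when $\lambda\mu$ or $\lambda^s\mu$ becomes trivial so that the corresponding summand collapses to $\Bbbk_{\op{tr}}$ or to $\Bbbk_{\op{tr}}\oplus\op{St}$.
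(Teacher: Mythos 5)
Your argument is correct and is essentially the paper's own proof written out in full: the paper disposes of $\op{St}\otimes\op{St}$ via Propositions \ref{Vplus} and \ref{Vminus} and then simply cites Proposition \ref{tensorinducedmodule} together with \cite[Theorem 3.1]{D2} for everything else, which is exactly the reduction $\mathbb{M}(\lambda)\otimes\mathbb{M}(\mu)\cong\mathbb{M}(\lambda\mu)\oplus(\op{St}\otimes\mathbb{M}(\lambda^s\mu))$ plus the Frobenius-reciprocity/weight-space bookkeeping you carry out explicitly. Your version has the minor added virtue of making transparent what happens in the overlapping degenerate case $\lambda=\mu$ with $\lambda^2=\op{tr}$ (where both $\lambda\mu$ and $\lambda^s\mu$ are trivial), which the case division in the statement glosses over.
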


 \begin{proof} We get $\mathcal{Q}(\op{St}\otimes \op{St})=\Bbbk_{\op{tr}}$ by Proposition \ref{Vplus} and  Proposition \ref{Vminus}.
The other results  follow from Proposition \ref{tensorinducedmodule} and \cite[Theorem 3.1]{D2}.
\end{proof}

\bigskip

\noindent{\bf Acknowledgements} The author is grateful to  Nanhua Xi,  Toshiaki Shoji and Xiaoyu Chen for their suggestions and helpful discussions.

\medskip

\noindent{\bf Statements and Declarations}  The author declares that he has no conflict of interests with others.

\medskip

\noindent{\bf Data Availability}  Data sharing not applicable to this article as no datasets were generated or analysed during the current study.

\bigskip

\bibliographystyle{amsplain}

\end{document}